\newcommand{\sumstar}{\sideset{}{^\star}\sum}
\newcommand\sumsum{\mathop{\sum\sum}\limits}
    \newtheorem{thm}{Theorem}[section] 
    \newtheorem{lem}[thm]{Lemma}  \newtheorem{prop}[thm]{Proposition}
     \newtheorem{defn}[thm]{Definition}
    \newtheorem {rem}[thm]{Remark}
\begin{document}
\title{Bounds for twists of $\rm GL(3)$ {$L$}-functions}
\author[Y. Lin]{Yongxiao Lin}

\address{Department of Mathematics, The Ohio State University\\ 231 W 18th Avenue\\
Columbus, Ohio 43210-1174}
\email{lin.1765@buckeyemail.osu.edu}


\begin{abstract}
Let $\pi$ be a fixed Hecke--Maass cusp form for $\mathrm{SL}(3,\mathbb{Z})$ and $\chi$ be a primitive Dirichlet character modulo $M$, which we assume to be a prime. Let $L(s,\pi\otimes \chi)$ be the $L$-function associated to $\pi\otimes \chi$. In this paper, for any given $\varepsilon>0$, we establish a subconvex bound $L(1/2+it, \pi\otimes \chi)\ll_{\pi, \varepsilon} (M(|t|+1))^{3/4-1/36+\varepsilon}$, uniformly in both the $M$- and $t$-aspects. 
\end{abstract}

\subjclass[2010]{11F66, 11M41}

\keywords{{$L$}-functions, subconvexity, Hecke--Maass cusp forms.}

\maketitle

\section{Introduction and statement of results}
The subconvexity problem, which asks for an estimate of an automorphic $L$-function on the critical line $s=1/2+it$ that is better by a power saving than the bound implied by the functional equation and the Phragmen--Lindel\"of principle, is one of the central problems in analytic number theory. Many cases have been treated in the past; see \cite{Michel-Venkatesh} for results with full generality on $\rm GL(2)$. It has only been recently that people have started making progress on $\rm GL(3)$ with the introduction of new techniques.

In this paper, we are interested in certain degree $3$ $L$-functions. Let $\pi$ be a fixed Hecke--Maass cusp form of type $(\nu_1,\nu_2)$ for $\mathrm{SL}(3,\mathbb{Z})$ with normalized Fourier coefficients $\lambda(m,n)$. Let $\chi$ be a primitive Dirichlet character modulo $M$. Let
\begin{equation*}
\begin{split}
L(s,\pi)=\sum_{n=1}^{\infty}\frac{\lambda(1,n)}{n^s}\quad \mbox{and}\quad 
L(s,\pi\otimes \chi)=\sum_{n=1}^{\infty}\frac{\lambda(1,n)\chi(n)}{n^s}
\end{split}\end{equation*}
be the $L$-series associated with $\pi$ and $\pi\otimes \chi$; these series can be continued to entire functions of $s\in \mathbb{C}$ with functional equations. The Phragmen--Lindel\"of principle implies the convexity bound  $L(1/2+it,\pi\otimes \chi)\ll_{\pi,\varepsilon} (M(|t|+1))^{3/4+\varepsilon}$, for which one aims to improve.

For the $L$-function $L(s,\pi)$, the first breakthrough was made by Li \cite{Li1} who resolved the subconvexity problem in the $t$-aspect for self-dual cusp form $\pi$. Using a first moment method, similar to the approach in \cite{Conrey-Iwaniec}, Li showed that $L(1/2+it,\pi)\ll_{\pi} (|t|+1)^{3/4-\delta+\varepsilon}$, with $\delta=1/16$. Li's approach also implies a subconvexity bound for certain $\rm GL(3)\times \rm GL(2)$ $L$-functions. The method depends on the non-negativity of central values of certain $L$-functions, which necessitates in the self-duality assumption on the cusp form $\pi$. Li's exponent of saving $\delta=1/16$ was later improved to $\delta=1/12$ by Mckee, Sun, and Ye \cite{MSY}, and to $\delta=1/8$ by Nunes \cite{Nunes}.

For the case where $M$, the conductor of the Dirichlet character $\chi$, is varying, in the special case that $\pi$ is self-dual and $\chi$ is quadratic, a subconvex bound was obtained by Blomer \cite{Blo12}. He showed that $L(1/2,\pi\otimes \chi)\ll_{\pi} M^{5/8+\varepsilon}$ by using a first moment method as in Li's work, where $M$ is assumed to be prime.  Later Huang \cite{Huang1}, with input from \cite{Young17}, managed to extend the results of Li and Blomer to the hybrid setting $L(1/2+it,\pi\otimes \chi)\ll_{\pi} (M(|t|+1))^{3/4-\delta}$, for some $\delta>0$, under the same self-duality assumptions on $\pi$ and $\chi$.

From a theorem of Miller \cite{Miller01}, self-dual cusp forms $f_j$ on $\rm SL(3,\mathbb{Z})\backslash \mathfrak{h}^3$ are sparse in the sense that 
\begin{equation*}
\lim_{T\rightarrow \infty}\frac{\#\{\lambda_j\leq T| \Delta f_j=\lambda_j f_j, f_j\, \text{self-dual}\}}{\#\{\lambda_j\leq T\}}=0.
\end{equation*}
It is therefore desirable to remove the self-duality assumptions in the previous works.

In a series of papers \cite{Munshi2014, Mun3, Mun4, Mun1, Mun2}, Munshi proposed a new approach to the subconvexity problem. This method does not need to assume the non-negativity of central values of certain $L$-functions, which enables Munshi to deal with more general cusp forms than just the self-dual subclass. 

In the $t$-aspect setting, by adopting Kloosterman's refinement of the circle method and enhanced by a ``conductor lowering" mechanism, Munshi \cite{Mun4} obtained the bound ${L(1/2+it,\pi)\ll_{\pi} (|t|+1)^{3/4-1/16+\varepsilon}}$, thus extending Li's result \cite{Li1} to arbitrary fixed cusp forms $\pi$. 

In the Dirichlet character twist case, by using a variant of the $\delta$-symbol method of Duke, Friedlander, and Iwaniec \cite{DFI}, a $\rm GL(2)$ Petersson $\delta$-symbol method, Munshi established $L\left(1/2,\pi\otimes \chi\right)\ll_{\pi} M^{3/4-1/1612+\varepsilon}$ \cite{Mun1}, under the Ramanujan conjecture for $\pi$. In a follow-up preprint \cite{Mun2}, with a much cleaner treatment, he removed such an assumption and improved the exponent of saving to $\delta=1/308$. Again, this approach does not require non-negativity of central values of certain $L$-functions, thereby removing the self-duality assumptions on the cusp forms $\pi$ and characters $\chi$ in Blomer's work \cite{Blo12}. 

More recently Holowinsky and Nelson \cite{HN17} discovered that there is a hidden identity within the proof of \cite{Mun2}, which allowed them to produce a method that removes the use of the Petersson $\delta$-symbol method and also improves the exponent of saving. They obtained a stronger subconvex exponent $L\left(1/2,\pi\otimes \chi\right)\ll_{\pi} M^{3/4-1/36+\varepsilon}$.

It is now desirable to ask, ``Can one prove a subconvex bound for the $L$-functions $L\left(s,\pi\otimes \chi\right)$, simultaneously in both the $M$- and $t$-aspects, for general $\rm SL(3,\mathbb{Z})$ Hecke--Maass cusp forms and primitive Dirichlet characters?" Our main result answers this affirmatively.
\begin{thm}\label{Main theorem}
Let $\pi$ be a Hecke--Maass cusp form for $\mathrm{SL}(3,\mathbb{Z})$ and $\chi$ be a primitive Dirichlet character modulo $M$, which we assume to be prime. Given any $\varepsilon>0$, we have
\begin{equation}\label{main saving}
\begin{split}
L\left(\frac{1}{2}+it,\pi\otimes \chi \right)\ll_{\pi, \varepsilon}&(M(|t|+1))^{3/4-1/36+\varepsilon}.
\end{split}
\end{equation}
\end{thm}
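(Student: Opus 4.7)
The starting point is an approximate functional equation: after a smooth dyadic partition of unity, writing $t_*=|t|+1$, it suffices to bound
$$S(N)=\sum_{n\geq 1}\lambda(1,n)\chi(n)n^{-it}V(n/N)$$
for $N\ll (Mt_*)^{3/2+\varepsilon}$, with $V$ a fixed bump. The convexity bound corresponds to the trivial estimate $|S(N)|\ll N^{1+\varepsilon}$, so the task is to save a factor of $(Mt_*)^{1/36}$ at the critical range $N\asymp (Mt_*)^{3/2}$.

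The plan is to adapt the Holowinsky–Nelson variant of Munshi's delta-method, treating $\chi(n)n^{-it}$ as a single ``harmonic'' of arithmetic conductor $M$ and analytic conductor $\asymp t_*$. Duplicate the sum as
$$S(N)=\sum_{n,m}\lambda(1,n)\chi(m)m^{-it}V(n/N)U(m/N)\,\delta(n=m),$$
and apply the conductor-lowering factorization $\delta(n=m)=\delta(n\equiv m\bmod M)\cdot\delta_{\mathbb Z}(n-m)$. The arithmetic congruence is detected by additive characters modulo $M$, while $\delta_{\mathbb Z}(n-m)$ is expanded by the direct Fourier decomposition used in \cite{HN17}, with an auxiliary modulus $c\asymp K$ to be optimized. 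The archimedean factor $(n/m)^{-it}$ produces an oscillatory integral whose stationary phase in $\xi$ localizes the new frequency parameter near $t/(2\pi N)$, so it plays the role of an ``archimedean conductor lowering'' parallel to the $\chi$-twist.

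Next, apply GL(3) Voronoi summation to the $n$-variable, converting $\lambda(1,n)$ into a dual sum of length $\asymp (KM)^3/N$ twisted by a hyper-Kloosterman sum and a GL(3) Bessel integral; stationary-phase analysis of that integral, interacting with the $n^{-it}$ oscillation, further localizes the dual variable in terms of $Mt_*$. Cauchy–Schwarz in the $m$-variable (together with the auxiliary variable) opens the square, and Poisson summation in $m$ modulo $cM$ reduces matters to an off-diagonal sum of products of hyper-Kloosterman sums and exponential integrals. Square-root cancellation in the character sums, combined with oscillation in the archimedean integrals, gives the final saving, and the $m_1=m_2$ diagonal is controlled by the standard Rankin–Selberg bound for $\sum|\lambda(1,m)|^2$.

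The main obstacle will be the simultaneous calibration of $K$ against both $M$ and $t_*$: Holowinsky–Nelson's balancing in the pure $M$-aspect that yields the exponent $1/36$ is already delicate, and I must check that every archimedean integral appearing after Voronoi, Cauchy–Schwarz, and Poisson carries a true conductor of size $Mt_*$ rather than only $M$ or only $t_*$, so that a single choice of $K$ polynomial in $Mt_*$ optimizes all terms jointly. Provided the stationary-phase analysis can be carried out uniformly, with all error terms controlled by the hybrid parameter $Mt_*$, the argument of \cite{HN17} should transplant with $M$ replaced throughout by $Mt_*$, yielding the claimed bound.
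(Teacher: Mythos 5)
Your overall strategy --- manufacture the harmonic $\chi(n)n^{-it}$ via a Holowinsky--Nelson/Munshi-type expansion, apply $\mathrm{GL}(3)$ Voronoi, then Cauchy--Schwarz and Poisson --- is the right family of ideas and matches the paper's skeleton, but as written the proposal has concrete gaps that would prevent it from closing. First, the ``conductor-lowering factorization'' $\delta(n=m)=\delta(n\equiv m\bmod M)\cdot\delta_{\mathbb Z}(n-m)$ is vacuous: for integers $n,m$ the second factor already \emph{is} $\delta(n=m)$, so no conductor has been lowered and no delta method has actually been specified. The paper avoids the duplication-of-variables step entirely; instead it introduces the auxiliary sum $\mathcal F_1$ of \eqref{new F1 sum}, applies reciprocity $e(-n\bar M/(\ell r))=e(n\overline{\ell r}/M)e(-n/(M\ell r))$ and Poisson in $r$ (Lemma \ref{Key lemma}), and recovers $\chi(n)n^{-it}$ from the \emph{zero frequency} via a Gauss sum together with a stationary-phase evaluation of $\int x^{-it}e(-nt/(Nx))V(x)\,dx$ at $x_0=2\pi n/N$. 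You need some precise identity of this kind; ``expand $\delta_{\mathbb Z}$ with an auxiliary modulus $c\asymp K$'' is not yet one.

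Second, you work with a single parameter $K$, whereas the exponent $1/36$ comes from a genuinely two-parameter optimization: the paper amplifies with two independent families of primes $p\sim P$ (weighted by $\bar\chi(p)p^{it}$) and $\ell\sim L$ (weighted by $\chi(\ell)\ell^{-it}$), and the four competing terms $\frac{N^{3/2}P}{MtL^{1/2}}$, $N^{3/4}(MtPL)^{1/4}$, $\frac{N^{1/2}Mt}{P}$, $\frac{(Mt)^{3/2}L}{P}$ cannot all be balanced with one degree of freedom. Third, and most importantly for the $t$-aspect: after Cauchy--Schwarz and Poisson in the off-diagonal of the ``error'' term $\mathcal O$, the trivial (second-derivative) bound $\mathfrak J\ll t^{-1}$ for the integral \eqref{the integral J1} is \emph{not} sufficient --- the paper's Remark after Lemma \ref{bound of J} notes this explicitly. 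One must extract an additional factor of $t$ by expanding each $\mathcal J_{it}$ by stationary phase and applying a first-derivative test to the resulting phase $\phi(y)$, whose derivative is $\gg|z_1-z_2|$ (Lemma \ref{bound of J}). Your phrase ``oscillation in the archimedean integrals gives the final saving'' points in this direction but does not identify this estimate, and without it the method is not subconvex in $t$. So the proposal is a reasonable outline of the correct strategy, but the delta-method step is incorrectly formulated and the two decisive ingredients (the two-prime amplifier and the refined bound on $\mathfrak J$) are missing.
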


\begin{rem}
Below we will carry out the proof under the assumption $|t|>M^{\varepsilon}$ for any $\varepsilon>0$. We make such an assumption so as to control the error term of the stationary phase analysis in our approach. For the case where $|t|<M^{\varepsilon}$, the bound \eqref{main saving} would follow from the work \cite{HN17}, since there their bound $L\left(1/2+it,\pi\otimes \chi\right)\ll_{t, \pi} M^{3/4-1/36+\varepsilon}$ is of polynomial dependence in $t$.
%
\end{rem}


For subconvexity bounds on $\rm GL(3)$ in other aspects, see \cite{Blomer-Buttcane, Sun2017, Sun-Zhao17, Blomer-Buttcane18}.

Our approach is a variant of the methods introduced in the works \cite{Mun2} and \cite{HN17}. In Section \ref{An outline of the proof}, we will give a brief outline of our approach to guide the readers through.  Recently, Raphael Schumacher \cite{Schumacher} has been able to provide another interpretation of the methods we follow, at least in the $t$-aspect case, from the perspective of integral representations under the framework of Michel--Venkatesh \cite{Michel-Venkatesh}, and produces the same bound \eqref{main saving}.

\subsection*{Notation.} We use $e(x)$ to denote $\exp(2\pi ix)$. We denote $\varepsilon$ an arbitrary small positive constant, which might change from line to line.
In this paper the notation $A\asymp B$ (sometimes even $A\approx B$) means that $B/(M|t|)^{\varepsilon}\ll |A|\ll B(M|t|)^{\varepsilon}$. We reserve the letters $p$ and $\ell$ to denote primes. The notations $p\sim P$ and $\ell\sim L$ denote primes in the dyadic segments $[P,2P]$ and $[L,2L]$ respectively. 

\section{An outline of the proof}\label{An outline of the proof}
Our approach is inspired by the work \cite{Mun2} and makes use of an observation due to Holowinsky and Nelson \cite{HN17}. We now give a brief introduction to the approach in \cite{Mun2}. 

Let $p$ be a prime number, and let $k\equiv 3\bmod{4}$ be a positive integer. Let $\psi$ be a character of $\mathbb{F}_p^{\times}$ satisfying $\psi(-1)=-1=(-1)^k$. One can view $\psi$ as a character modulo $pM$. Let $H_k(pM,\psi)$ be an orthogonal Hecke basis of the space of cusp forms $S_k(pM,\psi)$ of level $pM$, nebentypus $\psi$ and weight $k$. For $f\in H_k(pM,\psi)$, let $(\lambda_f(n))_{n\geq 1}$ denote its Fourier coefficients. 
Denote $P^{\star}=\sum_{P<p<2P}\sum_{\psi\bmod{p}}  (1-\psi(-1))$. Then we have the following averaged version of the Petersson formula:
\begin{equation}\label{Petersson}
\begin{split}
\delta(r,n) =  \frac{1}{P^{\star}} \sum_{p\sim P}\sum_{\psi\bmod{p}} & (1-\psi(-1))\sum_{f\in H_k(pM,\psi)}\omega_f^{-1} \overline{\lambda_f(r)} \lambda_f(n)\\ & - \frac{2\pi i}{P^{\star}} \sum_{p\sim P}\sum_{\psi\bmod{p}} (1-\psi(-1)) \sum_{c=1}^{\infty}\frac{S_{\psi}(r,n;cpM)}{cpM} J_{k-1}\left(\frac{4\pi\sqrt{rn}}{cpM}\right),
\end{split}
\end{equation}
where $\delta(r,n)$ denotes the Kronecker symbol, $\omega_f^{-1}=\frac{\Gamma(k-1)}{(4\pi)^{k-1}\|f\|^2}$ is the spectral weight, and $S_{\psi}(r,n;c)=\sumstar_{\alpha \bmod{c}}\psi(\alpha)e\left(\frac{r\alpha+n\bar{\alpha}}{c}\right)$ is the generalized Kloosterman sum.

Let $\mathcal{L}$ be the set of primes in the interval $[L,2L]$ and let $L^{\star}=|\mathcal{L}|$ denote the cardinality of $\mathcal{L}$. By writing his main sum of interest $\mathop{\sum\sum}_{m,n=1}^{\infty}\lambda(m,n)\chi(n)W\left(\frac{nm^2}{N}\right)V\left(\frac{n}{N}\right)$ as
\begin{equation*}
\frac{1}{L^\star}\sum_{\ell\in\mathcal{L}}\bar{\chi}(\ell)\mathop{\sum\sum}_{m,n=1}^{\infty}\lambda(m,n)W\left(\frac{nm^2}{N}\right)\sum_{r=1}^{\infty}\chi(r)V\left(\frac{r}{N\ell}\right)\delta(r,n\ell),
\end{equation*}
and then substituting the formula \eqref{Petersson} for $\delta(r,n\ell)$ inside, Munshi breaks the main sum into two pieces $\mathcal{F}^\star+\mathcal{O}^\star$, with $\mathcal{F}^\star$ and $\mathcal{O}^\star$ appropriately defined. Here the introduction of the extra summation over $\ell$ serves the role of an amplification technique. Successfully bounding $\mathcal{F}^\star$ and $\mathcal{O}^\star$ simultaneously with suitable choices of $P$ and $L$ to balance the contribution enables Munshi to get his main result $L\left(1/2,\pi\otimes \chi\right)\ll_{\pi} M^{3/4-1/308+\varepsilon}$.  

Now we turn to our case. From Lemma \ref{approximate FE}, it suffices to improve the trivial bound $O(N^{1+\varepsilon})$ for the smooth sum
\begin{equation*}
S(N):=\sum_{n\geq1} \lambda(1,n) \chi(n)n^{-it}w\left(\frac{n}{N}\right),
\end{equation*}
 for $(Mt)^{3/2-\delta}<N<(Mt)^{3/2+\varepsilon}$, where $w\left(x\right)$ is some smooth function with compact support contained in $\mathbb{R}_{>0}$ satisfying $w^{(j)}(x)\ll 1$ for all $j\geq 0$. 
 
 For the purpose of this sketch we assume the Ramanujan bound $|\lambda(1,n)|\ll n^{\varepsilon}$.
 
Let $P$ and $L$ be two large parameters to be specified later. In our case, instead of using the Petersson $\delta$-symbol method \eqref{Petersson}, we use a ``key identity" \eqref{lin analogue2}, 
\begin{equation*}
\begin{split}
\chi(n)n^{-it}&V_A\left(\frac{n}{N}\right)\\
=&\bigg(\frac{2\pi}{Mt}\bigg)^{it} e\bigg(\frac{t}{2\pi}\bigg)\frac{M^{2}t^{3/2}\ell}{Np g_{\bar{\chi}}}\sum_{r=1}^{\infty}\chi(r\ell \bar{p})\left(\frac{r\ell}{p}\right)^{-it}e\bigg(-\frac{np\bar{M}}{\ell r}\bigg)V\left(\frac{r}{Np/M\ell t}\right)\\
&-\bigg(\frac{2\pi}{N}\bigg)^{it}e\bigg(\frac{t}{2\pi}\bigg) \frac{t^{1/2}}{g_{\bar{\chi}}}\sum_{r\neq 0}S_{\bar{\chi}}(n,rp\bar{\ell};M)\mathcal{J}_{it}(n,rp/\ell;M)+O\left(t^{1/2-A}\right),
\end{split}
\end{equation*}
where
\begin{equation*}
\mathcal{J}_{it}(n,rp/\ell;M)=\int_{\mathbb{R}}V(x)x^{-it}e\bigg(-\frac{nt}{Nx}\bigg)e\bigg(-\frac{rNpx}{M^2\ell t}\bigg)\,\mathrm{d}x.
\end{equation*}
Here $V\left(x\right)$ is a smooth compactly supported function satisfying $V^{(j)}(x)\ll 1$ for all $j\geq 0$.

Thus we can write, for an arbitrarily large $A\geq 1$, that
 \begin{equation*}
S(N)\asymp |\mathcal{F}|+|\mathcal{O}|+O\left(Nt^{-A}\right),
\end{equation*}
where
\begin{equation}\label{F-term for sketch}
\begin{split}
\mathcal{F}=&\frac{M^{3/2}t^{3/2}}{NP^2}\sum_{p\sim P}\sum_{\ell\sim L}\sum_{r\sim NP/MLt}\chi(r\ell\bar{p})\left(\frac{r\ell}{p}\right)^{-it}\sum_{n=1}^{\infty}\lambda(1,n)e\left(-\frac{np\bar{M}}{\ell r}\right)w\left(\frac{n}{N}\right),
\end{split}
\end{equation}
and
\begin{equation*}
\begin{split}
\mathcal{O}=&\frac{t^{1/2}}{PLM^{1/2}}\sum_{n=1}^{\infty}\lambda(1,n)w\left(\frac{n}{N}\right)\sum_{p\sim P}\sum_{\ell\sim L}\sum_{0\neq |r|\ll \frac{M^2t^2L}{NP}}S_{\bar{\chi}}(n,rp\bar{\ell};M)\mathcal{J}_{it}(n,rp/\ell;M).
\end{split}
\end{equation*}

Now our task is to beat the bound $O(N^{1+\varepsilon})$ for $\mathcal{F}$ and $\mathcal{O}$ simultaneously.
We estimate the term $\mathcal{O}$ first. The integral
$\mathcal{J}_{it}(n,rp/\ell;M)$ restricts the length of the $r$-sum to $0\neq |r|\ll \frac{M^2t^2L}{NP}$. For this sketch we pretend that $r\sim \frac{M^2t^2L}{NP}$.

From the second derivative test we have $\mathcal{J}_{it}(n,rp/\ell;M)\ll t^{-1/2}$. Using this, along with the Weil bound for Kloosterman sums, and estimating trivially, we find that
\begin{equation*}
\begin{split}
\mathcal{O}\ll \frac{t^{1/2}}{PLM^{1/2}}NPL
\frac{M^2t^2L}{NP}M^{1/2}t^{-1/2}
\ll  N\frac{M^2t^2L}{NP}.
\end{split}
\end{equation*}
So we need to save more than $\frac{M^2t^2L}{NP}$ for $\mathcal{O}$.

We apply the Cauchy--Schwarz inequality to reduce the task to saving the same amount from 
\begin{equation*}
\begin{split}
\frac{N^{1/2}t^{1/2}}{PLM^{1/2}}\left(\sum_{n\sim N}\bigg|\sum_{p\sim P}\sum_{\ell\sim L}
\sum_{r\sim \frac{M^2t^2L}{NP}}S_{\bar{\chi}}(n,rp\bar{\ell};M)\mathcal{J}_{it}(n,rp/\ell;M)\bigg|^2\right)^{1/2},
\end{split}
\end{equation*}
or equivalently, saving $\frac{M^4t^4L^2}{N^2P^2}$ from the sum
\begin{equation*}
\begin{split}
\sum_{n\sim N}\bigg|\sum_{p\sim P}\sum_{\ell\sim L}
\sum_{r\sim \frac{M^2t^2L}{NP}}S_{\bar{\chi}}(n,rp\bar{\ell};M)\mathcal{J}_{it}(n,rp/\ell;M)\bigg|^2.
\end{split}
\end{equation*}

For the diagonal term $(p_1,\ell_1,r_1)=(p_2,\ell_2,r_2)$, we save $PL\frac{M^2t^2L}{NP}=\frac{M^2t^2L^2}{N}$, which is satisfactory as long as $\frac{M^2t^2L^2}{N}>\frac{M^4t^4L^2}{N^2P^2}$; i.e., $P\gg \frac{Mt}{N^{1/2}}$.
 
Opening the square above and applying Poisson summation to the $n$-sum, only the zero frequency contributes.
For the off-diagonal $(p_1,\ell_1,r_1)\neq (p_2,\ell_2,r_2)$, applying Poisson summation in the $n$-sum we save $M$ from evaluating 
\begin{equation*}\sum_{a(M)}S_{\bar{\chi}}(a,r_1p_1\bar{\ell_1};M)\overline{S_{\bar{\chi}}(a,r_2p_2\bar{\ell_2};M)},
\end{equation*}
and save $t$ from estimating the integral
\begin{equation*}
\mathfrak{J}=\int_{\mathbb{R}}\mathcal{J}_{it}(Ny,r_1p_1/\ell_1;M)\overline{\mathcal{J}_{it}(Ny,r_2p_2/\ell_2;M)}\,w\left(y\right)\mathrm{d}y
\end{equation*}
upon using the first derivative test for oscillatory integral (which is the content of Lemma \ref{bound of J}). So the estimates for the off-diagonal are satisfactory so long as $Mt\gg \frac{M^4t^4L^2}{N^2P^2}$, i.e., $P>\frac{M^{3/2}t^{3/2}L}{N}$. Hence $\mathcal{O}$ is fine for our purpose if $P>\max\{\frac{Mt}{N^{1/2}},\frac{M^{3/2}t^{3/2}L}{N}\}$.

Next, we try to bound the $\mathcal{F}$ term in \eqref{F-term for sketch}. Estimating trivially, we see that
\begin{equation*}
\begin{split}
\mathcal{F}\ll \frac{M^{3/2}t^{3/2}}{NP^2}PL\frac{NP}{MLt}N
\ll N(Mt)^{1/2}.
\end{split}
\end{equation*}
So our job is to save more than $(Mt)^{1/2}$.

We apply Voronoi summation (Lemma \ref{voronoi}) to the $n$-sum to get
\begin{equation*}\label{voronoi for the case of t}
\begin{split}
\mathcal{F}\asymp\frac{M^{3/2}t^{3/2}N^{1/2}}{NP^2}\left|\sum_{p\sim P}\sum_{\ell\sim L}\sum_{r\sim NP/MLt}\chi(r\ell\bar{p})\left(\frac{r\ell}{p}\right)^{-it}\sum_{n\sim (\ell r)^3/N}\frac{\lambda(n,1)}{\sqrt{n}}\frac{S(\bar{p}M,n;r\ell)}{\sqrt{r\ell}}\right|.
\end{split}
\end{equation*}
Using the Weil bound and estimating trivially we get $\mathcal{F}\ll (NP)^{3/2}/Mt$. We save $\frac{M^{3/2}t^{3/2}}{N^{1/2}P^{3/2}}$ from this process, compared to the original trivial bound $N(Mt)^{1/2}$, and we still need to save a little more than $\frac{N^{1/2}P^{3/2}}{Mt}$ from the new sum. Pulling the $r,n$-sums outside, and applying the Cauchy--Schwarz inequality, our job is to save $\frac{NP^3}{M^2t^2}$ from the sum
\begin{equation*}
\begin{split}
\sum_{r\sim \frac{NP}{MLt}}\sum_{n\sim \frac{N^2P^3}{M^3t^3}}\bigg|\sum_{p\sim P}\sum_{\ell\sim L}\chi(\ell\bar{p})(\ell/p)^{-it}S(\bar{p}M,n;r\ell)\bigg|^2.
\end{split}
\end{equation*}
We can save $PL$ from the diagonal, which is satisfactory if $PL>\frac{NP^3}{M^2t^2}$, that is, $L>\frac{NP^2}{M^2t^2}$.
Our final step involves opening the square and applying Poisson summation to the $n$-sum to gain saving for the off-diagonal terms $(p_1,\ell_1)\neq (p_2,\ell_2)$.  The zero frequency (which vanishes unless $\ell_1=\ell_2$) makes a contribution that is dominated by the diagonal $(p_1,\ell_1)= (p_2,\ell_2)$ contribution. The original $n$-sum can be estimated by
\begin{equation*}
\sum_{n\sim \frac{N^2P^3}{M^3t^3}}S\left(\bar{p_1}M,n;r\ell_1\right)S\left(\bar{p_2}M,n;r\ell_2\right)\ll \frac{N^2P^3}{M^3t^3}\sqrt{r\ell_1\cdot r\ell_2}.
\end{equation*}
After the Poisson summation in the $n$-sum, we gain square-root cancellation for the character sum
\begin{equation*}
\sum_{a(r\ell_1\ell_2)}S\left(\bar{p_1}M,a;r\ell_1\right)S\left(\bar{p_2}M,a;r\ell_2\right)e\left(\frac{an}{r\ell_1\ell_2}\right)
\end{equation*}
in ``generic" cases, 
so that the dual $n$-sum is dominated by $r^{3/2}\ell_1\ell_2$. We save $\frac{N^{3/2}P^{5/2}}{M^{5/2}t^{5/2}L^{1/2}}$, which is more than $\frac{NP^3}{M^2t^2}$ if $N/Mt>PL$. Hence $\mathcal{F}$ is fine if $\frac{NP^2}{M^2t^2}<L<\frac{N}{PMt}$. 

Now it turns out that we can choose optimally $P=(Mt)^{5/18}$ and $L=(Mt)^{1/9}$ to simultaneously beat the bound $O(N^{1+\varepsilon})$ for $\mathcal{F}$ and $\mathcal{O}$, which in turn implies a nontrivial bound for $S(N)$, for $(Mt)^{3/2-1/18}<N<(Mt)^{3/2+\varepsilon}$. This yields a subconvexity bound 
$L\left(1/2+it,\pi\otimes \chi \right)\ll_{\pi} (M(|t|+1))^{3/4-1/36+\varepsilon}$.

\section{Some lemmas}

In this section, we collect some lemmas that we may use in our proof.

Let $(\lambda(m,n))_{m,n\neq 0}$ be the Fourier coefficients of the $\rm SL(3,\mathbb{Z})$ Hecke--Maass cusp form $\pi$.

First we have the following Rankin--Selberg estimate (see for example \cite{Molteni}).
\begin{lem}\label{Rankin--Selberg}Given any $\varepsilon>0$, one has
$$\sumsum_{m^2n\leq X}|\lambda(m,n)|^2\ll X^{1+\varepsilon}.$$
\end{lem}
From the lemma, we readily have the similar estimate
\begin{equation}\label{first-moment}
\sum_{n\leq X}|\lambda(1,n)|\ll X^{1+\varepsilon},
\end{equation}
by the Cauchy--Schwarz inequality.

Following \cite{Young17} and \cite{{Kiral-Petrow-Young}}, we make the following definition.
\begin{defn}\label{definition-of-inert}
We say a smooth function $f(x_1,...,x_n)$ on $\mathbb{R}^n$ is \emph{inert} if
\begin{equation}\label{inert}
x_1^{j_1} \cdots x_n^{j_n}f^{(j_1,...,j_n)}(x_1,...,x_n)\ll_{j_1,...,j_n} 1,
\end{equation}
for all nonnegative integers $j_1,...,j_n$.
Here the superscript denotes partial
differentiation.
\end{defn}

For any $N\geq 1$,
let 
\begin{equation}\label{initial sum}
S(N)= \sum_{n\geq1} \lambda(1,n)\chi(n) n^{-it}\varpi\left(\frac{n}{N}\right),
\end{equation}
where $\varpi(x)$ is an \emph{inert} function on $\mathbb{R}$ with compact support contained in $\mathbb{R}_{>0}$.

By symmetry, we assume $t>2$ from now on. Using a standard approximate functional equation argument (\cite[Theorem 5.3]{Iw-Ko}) and the estimate \eqref{first-moment}, one can derive the following.
\begin{lem}\label{approximate FE}
For any $\delta>0$ and $\varepsilon>0$, we have
\begin{equation*}
L\left(\frac{1}{2}+it,\pi\otimes \chi \right)\ll (Mt)^{\varepsilon}\sup_{N}\frac{|S(N)|}{\sqrt{N}}+(Mt)^{3/4-\delta/2+\varepsilon},
\end{equation*}
where the supremum is taken over $N$ in the range $(Mt)^{3/2-\delta}<N<(Mt)^{3/2+\varepsilon}$.
\end{lem}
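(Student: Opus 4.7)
The plan is to apply the standard approximate functional equation to $L(s,\pi\otimes\chi)$, smoothly dyadically decompose the resulting Dirichlet sums, and dispose of the short-length part by the cited convexity-type estimate on the first absolute moment of Fourier coefficients.

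First I would invoke Theorem 5.3 of \cite{Iw-Ko} for the degree-three $L$-function $L(s,\pi\otimes\chi)$ at $s=1/2+it$. The analytic conductor is
\[
C(\pi\otimes\chi,\tfrac12+it)\asymp M^3(|t|+1)^3,
\]
since $\pi$ has conductor $1$ on $\mathrm{SL}(3,\mathbb{Z})$ and twisting by a primitive character of prime modulus $M$ multiplies the arithmetic conductor by $M^3$. With the balancing choice of cutoff parameter $X=\sqrt{C}\asymp (Mt)^{3/2}$, this yields
\[
L(\tfrac12+it,\pi\otimes\chi)=\sum_{n\geq 1}\frac{\lambda(1,n)\chi(n)}{n^{1/2+it}}V_1\!\left(\frac{n}{X}\right)+\epsilon(\pi\otimes\chi,\tfrac12+it)\sum_{n\geq 1}\frac{\overline{\lambda(1,n)\chi(n)}}{n^{1/2-it}}V_2\!\left(\frac{n}{X}\right),
\]
with $V_1,V_2$ smooth, bounded by $1$ near $0$, and of rapid decay at infinity, and with $|\epsilon(\pi\otimes\chi,1/2+it)|=1$.

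Next I would apply a smooth dyadic partition of unity in the variable $n$, writing each $V_j(n/X)$ as $\sum_{N\text{ dyadic}}w_N(n)$ where $w_N$ is supported in $[N,2N]$ with $w_N^{(j)}\ll_j N^{-j}$. Factoring out $N^{-1/2}$ from $n^{-1/2}$, each dyadic piece is precisely of the shape $S(N)/\sqrt{N}$ (up to an irrelevant smooth weight satisfying the same bounds as $w$). The rapid decay of $V_1,V_2$ truncates the dyadic sum at $N\leq (Mt)^{3/2+\varepsilon}$ with error $O((Mt)^{-A})$ for any $A>0$. For the dual (second) sum, the relations $\lambda(n,1)=\overline{\lambda(1,n)}$ and $\chi\mapsto\bar\chi$, $t\mapsto -t$ produce sums of exactly the same shape as $S(N)$, so both sides are bounded uniformly by $(Mt)^\varepsilon\sup_N|S(N)|/\sqrt{N}$.

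Finally, I split the truncated range at the threshold $N_0=(Mt)^{3/2-\delta}$. For the small range $N\leq N_0$, the trivial bound together with the cited estimate gives
\[
|S(N)|\leq \sum_{n\asymp N}|\lambda(1,n)|\ll N^{1+\varepsilon},\qquad \text{hence}\qquad \frac{|S(N)|}{\sqrt{N}}\ll N^{1/2+\varepsilon}\leq (Mt)^{3/4-\delta/2+\varepsilon};
\]
summing over the $O(\log(Mt))$ dyadic values is absorbed into $(Mt)^\varepsilon$. For the remaining range $(Mt)^{3/2-\delta}<N<(Mt)^{3/2+\varepsilon}$, we simply take the supremum, absorbing the logarithmic number of dyadic pieces into the factor $(Mt)^\varepsilon$. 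Adding the two contributions gives the stated inequality.

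There is no real obstacle: the entire argument is a bookkeeping exercise around a standard tool. The only point that requires a moment's care is the correct value of the analytic conductor (which fixes the truncation $X\asymp (Mt)^{3/2}$ and thus the threshold $(Mt)^{3/2-\delta}$) and the verification that the dual sum from the functional equation admits the same bound as the main sum, which follows from the unimodularity of the root number and the symmetry $\lambda(n,1)=\overline{\lambda(1,n)}$.
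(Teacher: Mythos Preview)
Your proposal is correct and follows exactly the route the paper indicates: it merely states that the lemma follows from the approximate functional equation \cite[Theorem 5.3]{Iw-Ko} together with the estimate $\sum_{n\leq X}|\lambda(1,n)|\ll X^{1+\varepsilon}$, without giving further details. Your dyadic decomposition, the identification of the analytic conductor $\asymp (Mt)^3$, and the handling of the dual sum via the symmetry $(\pi,\chi,t)\leftrightarrow(\tilde\pi,\bar\chi,-t)$ are the standard way to unpack this sketch.
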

From Lemma \ref{approximate FE}, it suffices to beat the convexity bound $N^{1+\varepsilon}$ for $S(N)$ for $N$ in the range $(Mt)^{3/2-\delta}<N<(Mt)^{3/2+\varepsilon}$, which we henceforth assume. Here $0<\delta<1/2$ is a small constant to be optimized later. 
 We observe for later convenience that
\begin{equation}\label{size of N}
(Mt)^{1+\varepsilon}<N.
\end{equation}

Let $\boldsymbol{\alpha}=(\alpha_1,\alpha_2,\alpha_3)$ be the Langlands parameters associated to the Maass cusp form $\pi$, with  
\begin{equation*}
\alpha_1+\alpha_2+\alpha_3=0, \quad \mbox{and}\quad |\Re \alpha_i|\leq \theta.
\end{equation*}
We recall the Ramanujan--Selberg conjecture predicts $\theta=0$, while from a result \cite{Jacquet-Shalika81} of Jacquet and Shalika one at least knows that $|\Re \alpha_i|< 1/2$.

Let
\begin{equation*}
G_{\delta}(s):= \begin{cases} 2(2\pi)^{-s}\Gamma(s)\cos(\pi s/2), & \quad  \text{if}\, \delta=0,\\
2i(2\pi)^{-s}\Gamma(s)\sin(\pi s/2), & \quad \text{if}\, \delta=1,\\
\end{cases}
\end{equation*}
and let
\begin{equation*}
G_{(\boldsymbol{\alpha},\boldsymbol{\delta})}(s)=\prod_{j=1}^{3}G_{\delta_j}(s+\alpha_j),
\end{equation*}
where $\boldsymbol{\delta}=(\delta_1,\delta_2,\delta_3)$.

Define 
\begin{equation*}
j_{(\boldsymbol{\alpha},\boldsymbol{\delta})}(x)=\frac{1}{2\pi i}\int_{\mathcal{C}}G_{(\boldsymbol{\alpha},\boldsymbol{\delta})}(s)x^{-s}\mathrm{d}s,\quad x>0,
\end{equation*}
where $\mathcal{C}$ is a curved contour such that all the singularities of $G_{(\boldsymbol{\alpha},\boldsymbol{\delta})}(s)$ are to the left of $\mathcal{C}$, defined as in Definition 3.2 of \cite{Qi2}.

Let 
\begin{equation*}
J_{\pi,\pm}(x):=J_{(\boldsymbol{\alpha},\boldsymbol{\delta})}(\pm x)=\frac{1}{2}\left(j_{(\boldsymbol{\alpha},\boldsymbol{\delta})}(x)\pm j_{(\boldsymbol{\alpha},\boldsymbol{\delta}+\boldsymbol{e})}(x)\right),
\end{equation*}
where $\boldsymbol{e}=(1,1,1)$, and $\boldsymbol{\delta}+\boldsymbol{e}$ is taken modulo $2$.
The Bessel function $J_{\pi,\pm}( x)$ satisfies the following properties.

%

\begin{lem}\label{J-Bessel}
(1). Let $\rho>\max\{-\Re \alpha_1, -\Re \alpha_2, -\Re \alpha_3 \}$. For $x\ll 1$, we have 
\begin{equation*}
x^jJ^{(j)}_{\pi,\pm}(x)\ll_{\alpha_1,\alpha_2,\alpha_3, \rho, j} x^{-\rho}.
\end{equation*}

(2). Let $K\geq 0$ be a fixed nonnegative integer. For $x>0$, we may write
\begin{equation*}\label{J_F}
J_{\pi,\pm}( x^3)=\frac{e(\pm 3x)}{x}W_\pi^{\pm}(x)+E_\pi^{\pm}(x),
\end{equation*}
where $W_\pi^{\pm}(x)$ and $E_\pi^{\pm}(x)$ are real-analytic functions on $(0,\infty)$ satisfying 
\begin{equation*}\label{W_F}
W_\pi^{\pm}(x)=\sum_{m=0}^{K-1}B^{\pm}_m(\pi) x^{-m}+O_{K,\alpha_1,\alpha_2,\alpha_3}\left(x^{-K}\right),
\end{equation*}
and
\begin{equation*}
E_\pi^{\pm,(j)}(x)\ll_{\alpha_1,\alpha_2,\alpha_3, j} \frac{\exp(-3\sqrt{3}\pi x)}{x},
\end{equation*}
for $x\gg_{\alpha_1,\alpha_2,\alpha_3} 1$, where $B^{\pm}_m(\pi)$ are constants depending on $\alpha_1$, $\alpha_2$ and $\alpha_3$.
\end{lem}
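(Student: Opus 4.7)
\medskip

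The plan is to base everything on the Mellin--Barnes representation of $j_{(\boldsymbol{\alpha},\boldsymbol{\delta})}$ together with Stirling's formula, and to locate the stationary points coming from the product of three Gamma factors. Writing $2\cos(\pi s/2) = e^{i\pi s/2}+e^{-i\pi s/2}$ and similarly for sine, each $G_{\delta_j}(s+\alpha_j)$ splits as a sum of two terms of the shape $(2\pi)^{-s-\alpha_j}\Gamma(s+\alpha_j)e^{\pm i\pi(s+\alpha_j)/2}$, so that $G_{(\boldsymbol{\alpha},\boldsymbol{\delta})}(s)$ breaks into eight pieces indexed by sign vectors $\boldsymbol{\epsilon}=(\epsilon_1,\epsilon_2,\epsilon_3)\in\{\pm1\}^3$. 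The two ``pure'' choices $\boldsymbol{\epsilon}=\pm(1,1,1)$ will supply the oscillating main terms, while the six ``mixed'' choices will supply only exponentially small contributions after a contour shift.

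For part (1), one shifts $\mathcal{C}$ to the vertical line $\Re s = \rho$. Since $\rho > \max\{-\Re\alpha_j\}$, the poles of $\Gamma(s+\alpha_j)$ lie strictly to the left and no residues are collected. Differentiation under the integral sign pulls out a factor $(-s)^j$ from $x^{-s}$, so
\begin{equation*}
x^j J_\pi^{(j)}(\pm x) = \frac{1}{2\pi i}\int_{(\rho)} P_j(s)\, G_{(\boldsymbol{\alpha},\boldsymbol{\delta})}(s)\, x^{-s}\, ds
\end{equation*}
for a polynomial $P_j$ of degree $j$. Stirling's formula gives $|G_{(\boldsymbol{\alpha},\boldsymbol{\delta})}(\rho+i\tau)|\ll (1+|\tau|)^{3\rho-3/2}e^{-3\pi|\tau|/2}\cdot(\text{polynomial cancellation from the cosine/sine})$, so the shifted integral converges absolutely and is $O(x^{-\rho})$, proving (1).

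For part (2), substitute $x\mapsto x^3$ and, for each sign piece indexed by $\boldsymbol{\epsilon}$, combine the phase $e^{i\pi s(\epsilon_1+\epsilon_2+\epsilon_3)/2}$ with the Stirling asymptotics $\Gamma(s+\alpha_j)\sim \sqrt{2\pi}\,s^{s+\alpha_j-1/2}e^{-s}$ to obtain an integrand whose logarithmic derivative equals $3\log s - 3 - 3\log x + i\pi(\epsilon_1+\epsilon_2+\epsilon_3)/2 + O(1/s)$. Setting this to zero gives the saddle point $s = x\exp\!\bigl(1 - i\pi(\epsilon_1+\epsilon_2+\epsilon_3)/6\bigr)$ (up to lower-order corrections). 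For $\sum\epsilon_j = \pm 3$ the saddle lies on the imaginary axis at $s\approx \mp i x$; deforming $\mathcal{C}$ to pass through this point, a steepest-descent expansion produces the leading oscillation $e(\pm 3x)/x$ multiplied by a classical asymptotic series in $1/x$, which is exactly $W_\pi^\pm(x)$. The coefficients $B_m^\pm(\pi)$ come from the Taylor expansion of the Stirling remainder against the Gaussian along the steepest-descent path, and the error after $K$ terms is $O(x^{-K-1})$ (absorbed into the error from the next $B^\pm$-term). For $\sum\epsilon_j=\pm 1$, the stationary point is off the imaginary axis at an angle, and the contour can be moved to pass through it while acquiring a negative real contribution in the exponent of size $3\sqrt{3}\pi x$; this produces the exponentially small term $E_\pi^\pm$ and its derivative bound by differentiating under the integral and redoing the saddle analysis.

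The main obstacle is the bookkeeping of the saddle-point expansion: one needs a uniform rearrangement of the Stirling expansions of three Gamma factors so that the leading term has a clean $e(\pm 3x)/x$ shape, and one needs to justify the deformation to the steepest-descent contour uniformly in the truncation parameter $K$ to get a polynomial error term rather than merely a qualitative asymptotic. This is carried out in detail in \cite{Qi2}, and the present lemma is essentially a rewrite of that analysis in the normalization convenient for the later stationary-phase applications.
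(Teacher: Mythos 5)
Your outline is correct and is essentially the same approach as the paper, whose entire proof of this lemma is the citation ``See \cite[Theorem 14.1]{Qi2}'' (after the change of variable $x\mapsto x^{1/3}$): the Mellin--Barnes decomposition into eight sign pieces, the contour shift for part (1), and the saddle-point analysis giving $e(\pm 3x)$ from $\sum\epsilon_j=\pm3$ and $e^{-3\sqrt{3}\pi x}$ from $\sum\epsilon_j=\pm1$ is exactly what that reference carries out. (One minor slip that does not affect the structure: since $\frac{d}{ds}(s\log s-s)=\log s$, the $-3$ in your logarithmic derivative is extraneous and the $(2\pi)^{-s}$ factors shift the saddle to $s=2\pi x\,e^{-i\pi\sigma/6}$ rather than $x\exp\bigl(1-i\pi\sigma/6\bigr)$; with this correction the stationary value $-3s_0=\pm 6\pi i x$ reproduces $e(\pm 3x)$ and the real part $-3\sqrt{3}\pi x$ in the mixed cases as claimed.)
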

\begin{proof}
See \cite[Theorem 14.1]{Qi2}; note that our $J_{\pi,\pm}( x)$ is the $J_{(\lambda,\delta)}(x^{1/3})$ in the notation of \cite{Qi2}.
\end{proof}

Now we recall the Voronoi formula for $\rm GL(3)$, in which the Bessel function $J_{\pi,\pm}( x)$ appears naturally.
\begin{lem}[\cite{Miller-Schmid}]\label{voronoi} 
For $(a,c)=1$, $\bar{a}a\equiv 1(\bmod{\,c})$, we have
\begin{equation}\label{Voronoi00}
\sum_{n=1}^{\infty}\lambda(m,n)e\left(-\frac{na}{c}\right)w(n)=c\sum_{\pm}\sum_{m'|mc}\sum_{n=1}^{\infty}\frac{\lambda(n,m')}{m'n}S(\bar{a}m,\pm n;mc/m')\,\frac{m'^2n}{mc^3}W^{\pm}\left(\frac{m'^2n}{mc^3}\right),
\end{equation}
where
\begin{equation*}
W^{\pm}(x)=\int_0^\infty w(y)J_{\pi,\mp}(xy)\mathrm{d}y.
\end{equation*}
\end{lem}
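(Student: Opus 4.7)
The plan is to derive the identity via Mellin inversion combined with the functional equation for the additively-twisted Dirichlet series
\[
D(s)=\sum_{n=1}^{\infty}\lambda(m,n)e(-na/c)n^{-s},
\]
which converges absolutely for $\Re s>1$. Writing $\tilde{w}(s)=\int_0^\infty w(y)y^{s-1}\,dy$ for the Mellin transform of $w$, Mellin inversion yields
\[
\sum_{n\ge 1}\lambda(m,n)e(-na/c)w(n)=\frac{1}{2\pi i}\int_{(\sigma)}D(s)\tilde{w}(s)\,ds
\]
for any $\sigma>1$, since $\tilde{w}$ decays faster than any polynomial in vertical strips.

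The crucial analytic input is the functional equation of $D(s)$. I would establish it by detecting the additive character $e(-na/c)$ via Dirichlet characters modulo $c$, reducing $D(s)$ to a linear combination of twisted $L$-functions $L(s,\pi\otimes\chi)$ to which the standard $\mathrm{GL}(3)$ functional equation applies. After unfolding Gauss sums and recombining the characters into Kloosterman sums, one arrives at an identity of the shape
\[
D(s)=(\text{constants})\cdot c^{3s-1}m^{-s}\sum_{\pm}\sum_{m'\mid mc}\frac{G^{\pm}_{(\boldsymbol{\alpha},\boldsymbol{\delta})}(1-s)}{m'^{\,1-2s}}\sum_{n\ge 1}\frac{\lambda(n,m')\,S(\bar{a}m,\pm n;mc/m')}{n^{1-s}};
\]
this is the Miller--Schmid functional equation, rigorously obtained via automorphic distributions in \cite{Miller-Schmid}.

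Next I would substitute the functional equation into the Mellin integral and shift the line of integration from $\Re s=\sigma$ to a line far to the left. The shift picks up no residues because $D(s)$ is entire ($\pi$ being cuspidal), and absolute convergence on the dual side after the shift justifies exchanging the $(m',n)$-summation with the contour integral. After the change of variables $s\mapsto 1-s$, the inner $s$-integral takes the form
\[
\frac{1}{2\pi i}\int_{\mathcal{C}}G^{\pm}_{(\boldsymbol{\alpha},\boldsymbol{\delta})}(s)\,X^{-s}\,\tilde{w}(1-s)\,ds,\qquad X=\frac{m'^{\,2}n}{mc^{3}},
\]
which, by the definition of $j_{(\boldsymbol{\alpha},\boldsymbol{\delta})}$ and hence of $J_\pi(\mp\,\cdot)$ together with the Mellin convolution identity, equals $X\cdot\int_0^\infty w(y)J_\pi(\mp Xy)\,dy=X\cdot W^{\pm}(X)$. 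Collecting the external factors of $c$, $m$ and $m'$ then produces precisely the stated identity.

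The principal obstacle is the rigorous verification of the functional equation and the interchange of summation with contour integration under only the Rankin--Selberg bound $\sum_{n\le N}|\lambda(n,m')|^{2}\ll(Nm')^{1+\varepsilon}$; the cleanest route is to invoke directly the automorphic distribution argument of \cite{Miller-Schmid}, which handles these analytic subtleties uniformly in the $\mathrm{GL}(r)$ setting, and content oneself with the bookkeeping of the divisors $m'\mid mc$ and the sign $\pm$ that emerges from the two components of $J_\pi$.
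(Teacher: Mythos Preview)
The paper does not actually prove this lemma; it simply cites it from \cite{Miller-Schmid} and moves on. Your sketch via Mellin inversion and the functional equation of the additively twisted $L$-series is the standard route and is essentially what \cite{Miller-Schmid} carries out (with the rigorous justification of the functional equation done through automorphic distributions), so there is nothing to compare against in the paper itself.
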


In particular, replacing $w(n)$ by $w\left(\frac{n}{N}\right)$ gives
\begin{equation*}\label{scaled voronoi}
\begin{split}
&\sum_{n=1}^{\infty}\lambda(m,n)e\left(-\frac{na}{c}\right)w\left(\frac{n}{N}\right)\\
&\quad\quad\quad=c\sum_{\pm}\sum_{m'|mc}\sum_{n=1}^{\infty}\frac{\lambda(n,m')}{m'n}S(\bar{a}m,\pm n;mc/m')\,\frac{Nm'^2n}{mc^3}W^{\pm}\left(\frac{Nm'^2n}{mc^3}\right).
\end{split}
\end{equation*}
If $w^{(j)}(y)\ll 1$, then from the oscillation of $J_{\pi,\pm}(x)$ when $|x|>N^{\varepsilon}$, $W^{\pm}\left(\frac{Nm'^2n}{mc^3}\right)$ is negligibly small as long as $m'^2n$ is such that $\frac{Nm'^2n}{mc^3}\gg N^{\varepsilon}$. 

If we write
\begin{equation*}
\mathcal{U}^{\pm}(x)=xW^{\pm}(x),
\end{equation*}
then \eqref{Voronoi00} becomes
\begin{equation}\label{Voronoi0}
\sum_{n=1}^{\infty}\lambda(m,n)e\left(-\frac{na}{c}\right)w(n)=c\sum_{\pm}\sum_{m'|mc}\sum_{n=1}^{\infty}\frac{\lambda(n,m')}{m'n}S(\bar{a}m,\pm n;mc/m')\,\mathcal{U}^{\pm}\left(\frac{m'^2n}{mc^3}\right),
\end{equation}
which is the usual version of Voronoi formula given in the work \cite{Miller-Schmid} and others.



\begin{rem}
Here the normalization of \eqref{Voronoi00} is different from the usual version \eqref{Voronoi0}. With this normalization, the weight function on the right is the Hankel transform of the original Schwartz class function, matching the rank one and rank two cases. We thank Zhi Qi for making us aware of this. 
\end{rem}

\begin{lem}[Miller's bound, \cite{Mil1}]\label{Miller's}
Uniformly in $\alpha\in \mathbb{R}$, we have
\begin{equation}\label{Miller's bound}
\sum_{n\leq X}\lambda(1,n)e(\alpha n)\ll_{\pi,\varepsilon}X^{\frac{3}{4}+\varepsilon}.
\end{equation}
\end{lem}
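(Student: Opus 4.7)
The plan is to follow Miller's original argument, whose key ingredients are the $\mathrm{GL}(3)$ Voronoi formula, Weil's bound for Kloosterman sums, and Rankin--Selberg moment estimates for $\lambda(n,m')$. By partial summation applied to a smooth dyadic partition of $[1,X]$, it suffices to establish the smooth estimate
$$S(N;\alpha) := \sum_n \lambda(1,n) e(\alpha n) w(n/N) \ll_{\pi,\varepsilon} N^{3/4+\varepsilon}$$
uniformly in $\alpha \in \mathbb{R}$ and in $N \le X$, where $w$ is a fixed smooth function supported in $[1,2]$. I would begin by invoking Dirichlet's approximation theorem with parameter $Q = N^{1/2}$ to write $\alpha = a/q + \beta$ with $(a,q) = 1$, $1 \le q \le Q$, and $|\beta| \le 1/(qQ)$, so that $|\beta N| \le N^{1/2}/q$.

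Next, I would apply the Voronoi formula (Lemma \ref{voronoi}) with $m=1$ and modulus $q$ to $S(N;\alpha)$, absorbing the mild oscillation $e(\beta n)$ into the weight via $w_\beta(y) := w(y) e(\beta N y)$. This transforms $S(N;\alpha)$ into
$$S(N;\alpha) = q \sum_\pm \sum_{m' \mid q} \sum_n \frac{\lambda(n,m')}{m'n}\, S(\bar{a}, \pm n; q/m')\, \mathcal{U}^\pm_\beta\!\left(\frac{N m'^2 n}{q^3}\right),$$
where $\mathcal{U}^\pm_\beta$ is the twisted Hankel transform of $w_\beta$ against $J_\pi$. Using Lemma \ref{J-Bessel} together with repeated integration by parts (the phase $2\pi\beta N y \mp 6\pi (xy)^{1/3}$ is non-stationary once $x$ exceeds a small threshold), one shows that $\mathcal{U}^\pm_\beta(x)$ is negligibly small for $x \gg N^\varepsilon$, so the dual sum is effectively truncated to $m'^2 n \ll q^3 N^{\varepsilon-1}$. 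Estimating termwise with the trivial bound $\mathcal{U}^\pm_\beta(x) \ll x$ for $x \ll 1$, Weil's bound $|S(\bar a, \pm n; q/m')| \ll (q/m')^{1/2+\varepsilon}$, and the Rankin--Selberg estimate $\sum_{n \le Y} |\lambda(n,m')| \ll (m' Y)^{1+\varepsilon}$, a short calculation summing over $m' \mid q$ yields $S(N;\alpha) \ll q^{3/2+\varepsilon} \le N^{3/4+\varepsilon}$, as required.

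The main obstacle is the stationary-phase analysis of $\mathcal{U}^\pm_\beta$ in the presence of the additional factor $e(\beta N y)$: the stationary point of the Bessel integrand shifts with $\beta N$, and the asymptotic from Lemma \ref{J-Bessel}(2) must be applied with some care to preserve the effective truncation of the dual sum. For $q \ge N^{1/2-\varepsilon}$ one has $|\beta N| \ll N^\varepsilon$ and the analysis is standard; for smaller $q$ the shift can be large, and one either uses a refined stationary-phase bound or exploits that the target bound $q^{3/2+\varepsilon}$ is then already very small compared to $N^{3/4}$, so there is room to absorb losses. A secondary routine point is the passage from the smooth bound back to the sharp cutoff, handled by summing the geometric series of dyadic $N \le X$.
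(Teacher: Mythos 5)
The paper does not prove this lemma: it is imported as a black box from Miller \cite{Mil1}, so there is no internal argument to compare against. Your sketch is, in outline, a faithful reconstruction of Miller's actual proof (Dirichlet approximation, $\mathrm{GL}(3)$ Voronoi at the rational point, Weil plus Rankin--Selberg on the dual side), and the computation you carry out is correct in the regime where it applies.

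The substantive issue is that the regime where your main calculation applies is $|\beta|N\ll N^{\varepsilon}$, and the complementary case is not a routine loose end --- it is the entire content of the uniformity in $\alpha$, which is the point of the lemma. When $|\beta|N\gg N^{\varepsilon}$ the transform $\mathcal{U}^{\pm}_{\beta}(x)$ is \emph{not} negligible for $x\gg N^{\varepsilon}$: the phase $\beta N y\mp 3(xy)^{1/3}$ has a stationary point, and the dual sum is effectively supported on $m'^2n\asymp q^3(1+|\beta|N)^3/N$, with $\mathcal{U}^{\pm}_{\beta}(x)\ll\sqrt{x}$ at the stationary range by second-derivative/stationary-phase bounds. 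Running your termwise estimate with this longer range gives $S(N;\alpha)\ll q^{3/2+\varepsilon}(1+|\beta|N)^{3/2}$, and the Dirichlet constraint $|\beta|\leq 1/(qN^{1/2})$ gives $(1+|\beta|N)^{3/2}\leq (N^{1/2}/q)^{3/2}$, so the product is $\leq N^{3/4+\varepsilon}$ with \emph{no slack}: the two factors balance exactly, and this balance is precisely where the exponent $3/4$ comes from. Your closing remark that ``the target bound $q^{3/2+\varepsilon}$ is then already very small compared to $N^{3/4}$, so there is room to absorb losses'' is therefore misleading --- for small $q$ the loss $(|\beta|N)^{3/2}$ can be as large as $N^{3/4}/q^{3/2}$ and eats all of the apparent room; one genuinely needs the refined stationary-phase bound $\mathcal{U}^{\pm}_{\beta}(x)\ll\sqrt{x}$ uniformly, not a trivial absorption. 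Two smaller points: the bound $\mathcal{U}^{\pm}(x)\ll x$ for $x\ll 1$ presumes temperedness; unconditionally one only has $\mathcal{U}^{\pm}(x)\ll x^{1/2-\varepsilon}$ via Jacquet--Shalika (this still yields $q^{3/2+\varepsilon}$, so it is harmless but should be stated correctly). And the passage from smooth dyadic weights back to the sharp cutoff $n\leq X$ needs the smooth estimate for weights localizing at scale $X^{1-\varepsilon}$ near the endpoint, since trivial estimation of the boundary range is far too lossy against a target of $X^{3/4}$.
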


\begin{lem}[{\cite[Lemma 2]{HN17}}]\label{ell_1 not ell_2}
Let $s_1$ and $s_2$ be natural numbers. Let $t_1,t_2$, and $n$ be integers. Set
\begin{equation*}
\mathcal{C}:=\sum_{x([s_1,s_2])}S(t_1x,1;s_1)S(t_2x,1;s_2)e\left(\frac{nx}{[s_1,s_2]}\right).
\end{equation*}
Write $s_i=w_i(s_1,s_2)$, $i=1,2$, and set $\Delta=w_2^2t_1-w_1^2t_2$. Then
\begin{equation*}
\begin{split}
|\mathcal{C}|\leq 2^{O(\omega([s_1,s_2]))}\left(s_1s_2[s_1,s_2]\right)^{1/2}\frac{(\Delta,n,s_1,s_2)}{(n,s_1,s_2)^{1/2}},
\end{split}
\end{equation*}
where $\omega([s_1,s_2])$ denotes the number of distinct prime factors of $[s_1,s_2]$, and the implied constant in $O$-symbol is absolute.
\end{lem}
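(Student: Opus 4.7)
My plan is to open both Kloosterman sums, execute the inner $x$-summation explicitly to produce a linear congruence, and then obtain square-root cancellation in the resulting double sum via a Chinese Remainder Theorem split and Weil's bound.

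Write $d=(s_1,s_2)$, so that $s_i=dw_i$ with $(w_1,w_2)=1$ and $[s_1,s_2]=dw_1w_2$. Opening
\begin{equation*}
S(t_ix,1;s_i)=\sumstar_{\alpha_i\,(s_i)} e\!\left(\frac{t_ix\alpha_i+\bar\alpha_i}{s_i}\right)
\end{equation*}
and combining the $x$-dependent exponentials over the common denominator $[s_1,s_2]$, the inner complete sum in $x$ vanishes unless $t_1w_2\alpha_1+t_2w_1\alpha_2+n\equiv 0\pmod{[s_1,s_2]}$, in which case it equals $[s_1,s_2]$. Hence $\mathcal{C}=[s_1,s_2]\,\mathcal{S}$ with
\begin{equation*}
\mathcal{S}=\sumstar_{\substack{\alpha_1(s_1),\,\alpha_2(s_2)\\ t_1w_2\alpha_1+t_2w_1\alpha_2+n\equiv 0\,([s_1,s_2])}} e\!\left(\frac{\bar\alpha_1}{s_1}+\frac{\bar\alpha_2}{s_2}\right).
\end{equation*}
Since $(s_1s_2[s_1,s_2])^{1/2}=[s_1,s_2]\,d^{1/2}$, the lemma reduces to proving $|\mathcal{S}|\ll 2^{O(\omega([s_1,s_2]))}\,d^{1/2}\,(\Delta,n,s_1,s_2)/(n,s_1,s_2)^{1/2}$.

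Next I would factor $\mathcal{S}=\prod_p \mathcal{S}_p$ via CRT along the prime-power decomposition of $[s_1,s_2]$. At a prime $p$ with $p^{a_i}\|s_i$: if $a_1\ne a_2$, the local congruence fixes one of $\alpha_1,\alpha_2$ in terms of the other (up to gcd factors involving $(t_i,p^{a_i})$), reducing $\mathcal{S}_p$ to a one-variable Kloosterman sum of modulus $p^{\min(a_1,a_2)}$; if $a_1=a_2$, which forces $p\mid d$ and $p\nmid w_1w_2$, one eliminates $\alpha_2$ from the congruence and substitutes into $e(\bar\alpha_2/s_2)$, obtaining a single Kloosterman sum in $\alpha_1$ whose coefficient, after clearing inverses, is a unit multiple of $w_2^2t_1-w_1^2t_2=\Delta$ together with a term proportional to $n$. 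Weil's bound applied to the surviving one-variable sum yields $|\mathcal{S}_p|\ll p^{a_1/2}\,g_p^{1/2}$, where $g_p$ records the common factor at $p$ of $\Delta$, $n$ and the modulus. Multiplying over $p$ collapses the local gcds to $(\Delta,n,s_1,s_2)/(n,s_1,s_2)^{1/2}$, and the divisor-type losses are absorbed into $2^{O(\omega([s_1,s_2]))}$.

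The main obstacle I anticipate is in the $a_1=a_2$ regime: verifying that the linear combination emerging after elimination is precisely $\Delta=w_2^2t_1-w_1^2t_2$, and that Weil's bound delivers the square-root-denominator factor $(n,s_1,s_2)^{1/2}$ rather than the full $(n,s_1,s_2)$. This requires careful bookkeeping of the inverses $\overline{t_2w_1}\pmod{p^{a_1}}$ that arise when solving the congruence for $\alpha_2$, and substituting them into $e(\bar\alpha_1/s_1+\bar\alpha_2/s_2)$; the half-power saving will then come from the standard gcd factor in the Weil estimate $|S(a,b;c)|\le\tau(c)(a,b,c)^{1/2}c^{1/2}$ once the surviving sum is recognised as a genuine Kloosterman sum with parameters involving $\Delta$ and $n$.
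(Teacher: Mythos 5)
The paper does not prove this lemma itself; it is imported verbatim from \cite[Lemma 2]{HN17}, and your outline follows the same route as the proof given there: open both Kloosterman sums, execute the complete $x$-sum to force the congruence $t_1w_2\alpha_1+t_2w_1\alpha_2+n\equiv 0\ ([s_1,s_2])$, factor by CRT, and bound each local sum by Weil/$p$-adic stationary phase, with the degenerate direction correctly identified as $\Delta=w_2^2t_1-w_1^2t_2\equiv 0$. The plan is sound as far as it goes, though the genuinely delicate part (the local case analysis when $p\mid t_i$ and the extraction of the exact factor $(\Delta,n,s_1,s_2)/(n,s_1,s_2)^{1/2}$) is flagged rather than carried out.
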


\begin{lem}\label{Key lemma}
Let $V$ be a smooth function with compact support in $\mathbb{R}_{> 0}$, satisfying $V^{(j)}(x)\ll_j 1$ for all $j\geq 0$. Assume $(M,r)=1$ and $n\asymp N$. For any integer $A\geq 1$, there exists an inert function $V_A(x)$ compactly supported in $\mathbb{R}_{>0}$ such that
\begin{equation}\label{variant to HN}
\begin{split}
&\sum_{r=1}^{\infty}\chi(r)r^{-it}e\left(-\frac{n\bar{M}}{r}\right)V\left(\frac{r}{N/Mt}\right)\\
=&\frac{N}{M^{3/2}t^{3/2}}\frac{g_{\bar{\chi}}}{\sqrt{M}}\left(\frac{2\pi}{Mt}\right)^{-it}e(-t/2\pi)\chi(n)n^{-it}V_A\left(\frac{2\pi n}{N}\right)+O\left(\frac{N}{M^{3/2} t^{1+A}}\right)\\
&+\frac{1}{M}\left(\frac{N}{M t}\right)^{1-it}\sum_{\tilde{r}\neq 0}S_{\bar{\chi}}(n,\tilde{r};M)\int_{\mathbb{R}}x^{-it}e\left(-\frac{nt}{Nx}\right)V\left(x\right)e\left(-\frac{\tilde{r}N}{M^2 t}x\right)\mathrm{d}x,
\end{split}
\end{equation}
where $S_{\bar{\chi}}(n,\tilde{r};M)$ is the generalized Kloosterman sum.
\end{lem}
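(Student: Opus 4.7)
The approach has two stages: first, use reciprocity followed by Poisson summation to convert the modulus of summation from the variable $r$ to the fixed $M$; then apply stationary phase to extract the main term from the zero-frequency contribution.

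Since $(M,r)=1$, the reciprocity relation $\tfrac{\bar M}{r} + \tfrac{\bar r}{M} \equiv \tfrac{1}{Mr} \pmod 1$ gives
\[
e\!\left(-\frac{n\bar M}{r}\right) = e\!\left(\frac{n\bar r}{M}\right)\, e\!\left(-\frac{n}{Mr}\right),
\]
so the $r\bmod M$ dependence of the summand is isolated in $\chi(r) e(n\bar r/M)$. Writing $r = a + Mk$ with $a$ running over units modulo $M$ and applying Poisson summation in $k$ against the remaining smooth weight $u \mapsto u^{-it} e(-n/(Mu)) V(uMt/N)$, the inner character sum collapses to $S_\chi(r,n;M)$. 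After rescaling $u = (N/Mt)x$, the left-hand side of \eqref{variant to HN} becomes
\[
\frac{1}{M}\!\left(\frac{N}{Mt}\right)^{\!1-it}\! \sum_{r \in \mathbb{Z}} S_\chi(r, n; M) \int_{\mathbb{R}} x^{-it} e\!\left(-\frac{nt}{Nx}\right) V(x)\, e\!\left(-\frac{rN}{M^2 t}\,x\right) \mathrm{d}x,
\]
and the terms with $r \neq 0$ reproduce the last line of \eqref{variant to HN} verbatim.

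For $r = 0$, primitivity of $\chi$ gives $S_\chi(0,n;M) = \chi(n) g_{\bar\chi}$ (with $\chi(n) = 0$ when $M \mid n$, handled automatically since $M$ is prime). What remains is to analyze
\[
I(n) := \int_{\mathbb{R}} x^{-it} e\!\left(-\frac{nt}{Nx}\right) V(x)\, \mathrm{d}x,
\]
whose phase $\phi(x) = -(t/2\pi) \log x - nt/(Nx)$ has a unique nondegenerate critical point at $x_0 = 2\pi n/N \asymp 1$ with $\phi''(x_0) \asymp -t$ (lying in the support of $V$ because $n \asymp N$). The inert-amplitude stationary phase formalism of \cite{Kiral-Petrow-Young} then yields, for any $A \geq 1$,
\[
I(n) = t^{-1/2}\,(2\pi n/N)^{-it}\, e(-t/2\pi)\, V_A(2\pi n/N) + O_A\!\left(t^{-A}\right),
\]
with $V_A$ inert in $n/N$ and supported on $x \asymp 1$. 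Multiplying by $\tfrac{g_{\bar\chi}}{M}(N/Mt)^{1-it}$, using $(N/Mt)^{-it}(2\pi n/N)^{-it} = (2\pi/Mt)^{-it} n^{-it}$, and rewriting $g_{\bar\chi}/M = (g_{\bar\chi}/\sqrt M)\cdot M^{-1/2}$ with $|g_{\bar\chi}| = \sqrt M$, one reads off exactly the first main term of \eqref{variant to HN}; the stationary phase error transfers to $O(N/(M^{3/2} t^{1+A}))$.

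The only step that requires technical care is the stationary phase analysis: one must verify that the main amplitude $V_A$ is inert in the parameter $n/N$ uniformly for $n \asymp N$, and that the error can be taken to decay arbitrarily fast in $t$. Both follow immediately from the fact that $\phi$ has a single nondegenerate critical point with $|\phi''| \asymp t$, together with the smoothness and inertness of $V$, so the standard statement in \cite{Kiral-Petrow-Young} applies directly. The remaining parts of the argument are bookkeeping of prefactors.
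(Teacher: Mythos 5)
Your proposal is correct and follows essentially the same route as the paper's own proof: reciprocity to move the modulus to $M$, Poisson summation in $r$ producing the generalized Kloosterman sums, identification of the $r=0$ frequency with $g_{\bar\chi}\chi(n)$, and the Kiral--Petrow--Young stationary phase expansion at $x_0=2\pi n/N$ for the main term. The bookkeeping of prefactors and the error term $O(N/(M^{3/2}t^{1+A}))$ also matches.
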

\begin{proof}
Writing
\begin{equation*}
e\left(-\frac{n\bar{M}}{r}\right)=e\left(\frac{n\bar{r}}{M}\right)e\left(-\frac{n}{Mr}\right),
\end{equation*}
which follows from reciprocity, and applying Poisson summation, the $r$-sum becomes
\begin{equation*}
\begin{split}
&\sum_{r=1}^{\infty}\chi(r)e\left(\frac{n\bar{r}}{M}\right)r^{-it}e\left(-\frac{n}{M r}\right)V\left(\frac{r}{N/Mt}\right)\\
=&\frac{N}{M^2 t}\sum_{\tilde{r}\in \mathbb{Z}}\sum_{a(M)}\chi(a)e\left(\frac{n\bar{a}}{M}\right)e\left(\frac{a\tilde{r}}{M}\right)\int_{\mathbb{R}}\left(\frac{N}{M t}x\right)^{-it}e\left(-\frac{nt}{Nx}\right)V\left(x\right)e\left(-\frac{\tilde{r}N}{M^2t}x\right)\mathrm{d}x.
\end{split}
\end{equation*}

In particular, the zero frequency $\tilde{r}=0$ contribution is
\begin{equation*}
\begin{split}
\frac{1}{M}\left(\frac{N}{Mt}\right)^{1-it}g_{\bar{\chi}}\chi\left(n\right)\int_{\mathbb{R}}x^{-it}e\left(-\frac{nt}{Nx}\right)V\left(x\right)\mathrm{d}x.
\end{split}
\end{equation*}

Considering the integral,
by \cite[Main Theorem]{Kiral-Petrow-Young}, there is an inert function $V_A$ supported on $x_0 \asymp 1$ such that
\begin{equation*}
\begin{split}
\int_{\mathbb{R}}x^{-it}e\left(-\frac{nt}{Nx}\right)V\left(x\right)\mathrm{d}x
=&\int_{\mathbb{R}}e\left(-\frac{t\log x}{2\pi}-\frac{nt}{Nx}\right)V\left(x\right)\mathrm{d}x\\
=&\frac{e(f(x_0))}{\sqrt{t}}V_A(x_0)+O_A\left(t^{-A}\right),
\end{split}
\end{equation*}
where $f(x)=-\frac{t\log x}{2\pi}-\frac{nt}{Nx}$, and $x_0=\frac{2\pi n}{N}$ is the unique solution for $f^\prime(x)=0$, and $A\geq 1$ is any arbitrarily large constant.
Therefore, 
\begin{equation*}
\begin{split}
&\int_{\mathbb{R}}x^{-it}e\left(-\frac{nt}{Nx}\right)V\left(x\right)\mathrm{d}x
=\left(\frac{2\pi}{N}\right)^{-it}\frac{e(-t/2\pi)}{\sqrt{t}}n^{-it}V_A\left(\frac{2\pi n}{N}\right)+O(t^{-A}).
\end{split}
\end{equation*}
Hence 
\begin{equation*}
\begin{split}
&\sum_{r=1}^{\infty}\chi(r)e\left(\frac{n\bar{r}}{M}\right)r^{-it}e\left(-\frac{n}{Mr}\right)V\left(\frac{r}{N/Mt}\right)\\
=&\frac{1}{M}\left(\frac{N}{Mt}\right)^{1-it}g_{\bar{\chi}}\,\chi\left(n\right)\left(\frac{2\pi}{N}\right)^{-it}\frac{e(-t/2\pi)}{\sqrt{t}}n^{-it}V_A\left(\frac{2\pi n}{N}\right)+O\left(\frac{N}{M^{3/2}t^{1+A}}\right)\\
&+\frac{1}{M}\left(\frac{N}{Mt}\right)^{1-it}\sum_{\tilde{r}\neq 0}S_{\bar{\chi}}(n,\tilde{r};M)\int_{\mathbb{R}}x^{-it}e\left(-\frac{nt}{Nx}\right)V\left(x\right)e\left(-\frac{\tilde{r}N}{M^2t}x\right)\mathrm{d}x,
\end{split}
\end{equation*}
and \eqref{variant to HN} follows.
\end{proof}

\begin{rem}
The identity \eqref{variant to HN},
\begin{equation*}
\begin{split}
\chi(n)n^{-it}V_A\left(\frac{n}{N}\right)
=&\bigg(\frac{2\pi}{Mt}\bigg)^{it} e\bigg(\frac{t}{2\pi}\bigg)\frac{M^{2}t^{3/2}}{Ng_{\bar{\chi}}}\sum_{r=1}^{\infty}\chi(r)e\bigg(\frac{n\bar{r}}{M}\bigg)r^{-it}e\bigg(-\frac{n}{Mr}\bigg)V\left(\frac{r}{N/Mt}\right)\\
&-\bigg(\frac{2\pi}{N}\bigg)^{it}e\bigg(\frac{t}{2\pi}\bigg) \frac{t^{1/2}}{g_{\bar{\chi}}}\sum_{\tilde{r}\neq 0}S_{\bar{\chi}}(n,\tilde{r};M)\int_{\mathbb{R}}V(x)x^{-it}e\bigg(-\frac{nt}{Nx}\bigg)e\bigg(-\frac{\tilde{r}Nx}{M^2t}\bigg)\,\mathrm{d}x\\
&\quad+O\left(t^{1/2-A}\right),
\end{split}
\end{equation*}
is a variant of the following key identity in \cite[(3.6)]{HN17}.
\begin{equation*}
\begin{split}
\chi(n)\hat{V}(0)=\frac{M}{Rg_{\bar{\chi}}}\sum_{r\in\mathbb{Z}}\chi(r)e\left(\frac{n\bar{r}}{M}\right)V\left(\frac{r}{R}\right)-\frac{1}{g_{\bar{\chi}}}\sum_{\tilde{r}\neq 0}S_{\bar{\chi}}(n,\tilde{r};M)\hat{V}\left(\frac{\tilde{r}}{M/R}\right),
\end{split}
\end{equation*}where $R>0$ is a parameter and $\hat{V}$ denotes the Fourier transform of the Schwartz function $V$, which is compactly supported in $\mathbb{R}_{>0}$. Inserting the identity, with an amplification technique, one can express the smoothed sum $\sum_{n\geq1} \lambda(1,n) \chi(n)w\left(\frac{n}{N}\right)$ as $\mathcal{F}+\mathcal{O}$. Balancing the contribution of $\mathcal{F}$ and $\mathcal{O}$ properly, the authors of \cite{HN17} obtained $L\left(1/2,\pi\otimes \chi\right)\ll M^{3/4-1/36+\varepsilon}$.
\end{rem}

From Lemma \ref{Key lemma}, assuming $(M,\ell r)=1$ and $n\asymp N$, one has
\begin{equation}\label{variant to HN 2}
\begin{split}
&\sum_{r=1}^{\infty}\chi(r)r^{-it}e\left(-\frac{np\bar{M}}{\ell r}\right)V\left(\frac{r}{Np/M\ell t}\right)\\
=&\frac{Np}{M^{3/2}t^{3/2}\ell}\frac{g_{\bar{\chi}}}{\sqrt{M}}\left(\frac{2\pi p}{M\ell t}\right)^{-it}e(-t/2\pi)\chi\left(p\bar{\ell}\right)\chi(n)n^{-it}V_A\left(\frac{2\pi n}{N}\right)\\
&+\frac{1}{M}\left(\frac{Np}{M\ell t}\right)^{1-it}\chi(p\bar{\ell})\sum_{\tilde{r}\neq 0}S_{\bar{\chi}}(n,\tilde{r}p\bar{\ell};M)\mathcal{J}_{it}(n,\tilde{r}p/\ell;M)+O\left(\frac{Np}{M^{3/2}\ell t^{1+A}}\right),
\end{split}
\end{equation}
or, taking another form,
\begin{equation}\label{lin analogue2}
\begin{split}
\chi(n)n^{-it}&V_A\left(\frac{2\pi n}{N}\right)\\
=&\bigg(\frac{2\pi}{Mt}\bigg)^{it} e\bigg(\frac{t}{2\pi}\bigg)\frac{M^{2}t^{3/2}\ell}{Np g_{\bar{\chi}}}\sum_{r=1}^{\infty}\chi(r\ell \bar{p})\left(\frac{r\ell}{p}\right)^{-it}e\bigg(-\frac{np\bar{M}}{\ell r}\bigg)V\left(\frac{r}{Np/M\ell t}\right)\\
&-\bigg(\frac{2\pi}{N}\bigg)^{it}e\bigg(\frac{t}{2\pi}\bigg) \frac{t^{1/2}}{g_{\bar{\chi}}}\sum_{\tilde{r}\neq 0}S_{\bar{\chi}}(n,\tilde{r}p\bar{\ell};M)\mathcal{J}_{it}(n,\tilde{r}p/\ell;M)+O\left(t^{1/2-A}\right),
\end{split}
\end{equation}
where
\begin{equation}\label{J-integral}
\mathcal{J}_{it}(n,\tilde{r}p/\ell;M):=\int_{\mathbb{R}}x^{-it}e\left(-\frac{nt}{Nx}\right)V\left(x\right)e\left(-\frac{\tilde{r}Np}{M^2\ell t}x\right)\mathrm{d}x.
\end{equation}

We shall use \eqref{variant to HN 2} as a ``key identity" in our proof; see Section \ref{Reducing}.

\begin{lem}\label{spacing}
For any $\varepsilon>0$, one has
\begin{equation*}
\begin{split}
&\mathop{\sum_{p_1\sim P}\sum_{p_2\sim P}\sum_{\ell_1\sim L}\sum_{\ell_2\sim L}\sum_{r_1\sim R}\sum_{r_2\sim R}}_{\ell_1 r_2p_2\neq \ell_2 r_1p_1}\frac{1}{\left|\ell_1 r_2p_2-\ell_2 r_1p_1\right|}\ll (LPR)^{1+\varepsilon},
\end{split}
\end{equation*}
and
\begin{equation*}\mathop{\sum_{p_1\sim P}\sum_{p_2\sim P}\sum_{\ell_1\sim L}\sum_{\ell_2\sim L}\sum_{r_1\sim R}\sum_{r_2\sim R}}_{\substack{\ell_1 r_2p_2\neq \ell_2 r_1p_1\\ \ell_1 r_2p_2\equiv \ell_2 r_1p_1(M)}}\frac{1}{\left|r_1p_1\ell_2-r_2p_2\ell_1\right|}\ll \frac{(LPR)^{1+\varepsilon}}{M}.
\end{equation*}
\end{lem}
\begin{proof}
The first sum is bounded by
\begin{equation*}
\begin{split}
&\sum_{m_1\sim LPR}\mathop{\sum_{m_2\sim LPR}}_{m_2\neq  m_1}\frac{\tau_3(m_1)\tau_3(m_2)}{\left|m_1- m_2\right|}\leq (LPR)^{\varepsilon}\sum_{m\sim LPR}\sum_{1\leq h\ll LPR}\frac{1}{h}\ll (LPR)^{1+\varepsilon}.
\end{split}
\end{equation*}
Hence the first inequality follows. Here $\tau_3(m):=\sum_{abc=m}1$.

The second inequality can be proven using similar argument.


\end{proof}


\section{Reducing $S(N)$ to $\mathcal{F}_1$ and $\mathcal{O}$}\label{Reducing}
Our basic strategy is to introduce more ``points" of summation to mimic the smoothed sum $S(N)$ in \eqref{initial sum}, which is our main object of study. Through out the paper we assume that $|t|>M^{\varepsilon}$ for any $\varepsilon>0$.

Let $P$ and $L$ be two large parameters.
We begin by introducing the following sum
\begin{equation}\label{new F1 sum}
\begin{split}
\mathcal{F}_1=& \frac{M^{3/2}t^{3/2}}{NP^2}\sum_{p\sim P}\bar{\chi}(p)p^{it}\sum_{\ell\sim L}\chi(\ell)\ell^{-it}\sum_{r=1}^{\infty}\chi(r)r^{-it}V\left(\frac{r}{Np/M\ell t}\right)\sum_{n=1}^{\infty}\lambda(1,n)e\left(-\frac{np\bar{M}}{\ell r}\right)w\left(\frac{n}{N}\right),
\end{split}
\end{equation}
where $p\sim P$ and $\ell\sim L$ denote primes in the dyadic segments $[P,2P]$ and $[L,2L]$, respectively; $w$ and $V$ are smooth functions with compact supports in $\mathbb{R}_{> 0}$ satisfying $w^{(j)}(x), V^{(j)}(x)\ll_j 1$ for all $j\geq 0$. 

We shall see that if one applies Poisson summation to the $r$-sum (which is the content of Lemma \ref{Key lemma}), then the contribution of the zero frequency $\tilde{r}=0$ ($\tilde{r}$ the variable dual to $r$) will give rise to the sum $S(N)$ that we are initially interested in. In order to bound $S(N)$, it suffices to bound $\mathcal{F}_1$ and the sum arising from the non-zero frequencies $\tilde{r}\neq 0$ of the dual sum, which we denote by $\mathcal{O}$. This observation is initially due to Holowinsky and Nelson \cite[B.4]{HN17}, in their work in the Dirichlet character twist case.

Plugging the identity \eqref{variant to HN 2} in, we get
\begin{equation*}
\begin{split}
\mathcal{F}_1=&\left(\frac{2\pi}{Mt}\right)^{-it}e\left(-\frac{t}{2\pi}\right)\frac{g_{\bar{\chi}}}{M^{1/2}}\sum_{p\sim P}p/P^2\sum_{\ell\sim L}\ell^{-1}\sum_{n=1}^{\infty}\lambda(1,n)\chi(n)n^{-it}w\left(\frac{n}{N}\right)V_A\left(\frac{2\pi n}{N}\right)\\
&+\left(\frac{N}{Mt}\right)^{-it}\frac{t^{1/2}}{P^2M^{1/2}}\sum_{n=1}^{\infty}\lambda(1,n)w\left(\frac{n}{N}\right)\sum_{p\sim P}p\sum_{\ell\sim L}\ell^{-1}\\
&\quad\quad\sum_{\tilde{r}\neq 0}S_{\bar{\chi}}(n,\tilde{r}p\bar{\ell};M)\mathcal{J}_{it}(n,\tilde{r}p/\ell;M)+O(Nt^{1/2-A}),\end{split}
\end{equation*}
which implies 
\begin{equation*}
\begin{split}
 \frac{1}{\log P\log L}&\sum_{n=1}^{\infty}\lambda(1,n)\chi(n)n^{-it}w\left(\frac{n}{N}\right)V_A\left(\frac{2\pi n}{N}\right)\asymp |\mathcal{F}_1| +O(Nt^{1/2-A})\\+&\frac{t^{1/2}}{M^{1/2}PL}\bigg|\sum_{n=1}^{\infty}\lambda(1,n)w\left(\frac{n}{N}\right)\sum_{p\sim P}\sum_{\ell\sim L}\sum_{\tilde{r}\neq 0}S_{\bar{\chi}}(n,\tilde{r}p\bar{\ell};M)\mathcal{J}_{it}(n,\tilde{r}p/\ell;M)\bigg|.
\end{split}
\end{equation*}

We have shown the following.
\begin{lem}
For any positive integers $A\geq 1$, there exists an inert function $V_A(x)$ with compact support in $\mathbb{R}_{>0}$ such that asymptotically, one has
\begin{equation}\label{connection1}
\begin{split}
\frac{1}{\log P\log L}\sum_{n=1}^{\infty}\lambda(1,n)\chi(n)n^{-it}w\left(\frac{n}{N}\right)V_A\left(\frac{2\pi n}{N}\right)
\asymp |\mathcal{F}_1|+|\mathcal{O}|+O\left(Nt^{1/2-A}\right),
\end{split}
\end{equation}
with
\begin{equation*}
\begin{split}
\mathcal{F}_1=&\frac{M^{3/2}t^{3/2}}{NP^2}\sum_{p\sim P}\bar{\chi}(p)p^{it}\sum_{\ell\sim L}\chi(\ell)\ell^{-it}\sum_{r=1}^{\infty}\chi(r)r^{-it}V\left(\frac{r}{Np/M\ell t}\right)\sum_{n=1}^{\infty}\lambda(1,n)e\left(-\frac{np\bar{M}}{\ell r}\right)w\left(\frac{n}{N}\right),
\end{split}
\end{equation*}
and
\begin{equation}\label{the O1 sum}
\begin{split}
\mathcal{O}=&\frac{t^{1/2}}{M^{1/2}PL}\sum_{n=1}^{\infty}\lambda(1,n)w\left(\frac{n}{N}\right)\sum_{p\sim P}\sum_{\ell\sim L}\sum_{\tilde{r}\neq 0}S_{\bar{\chi}}(n,\tilde{r}p\bar{\ell};M)\mathcal{J}_{it}(n,\tilde{r}p/\ell;M),
\end{split}
\end{equation}
where $\mathcal{J}_{it}(n,\tilde{r}p/\ell;M)$ is given by \eqref{J-integral}.
\end{lem}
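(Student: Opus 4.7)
The plan is to insert the identity \eqref{variant to HN 2} into the inner $r$-sum of $\mathcal{F}_1$ and then identify each of the three resulting pieces (zero frequency, non-zero frequencies, error) with the corresponding term on the right-hand side of \eqref{connection1}. The hypothesis $(M,\ell r)=1$ required by Lemma \ref{Key lemma} is automatic: $M$ is prime, at most one $\ell\sim L$ can equal $M$ (and that term is negligible), and the factor $\chi(r)$ forces $(r,M)=1$. After substitution, $\mathcal{F}_1$ becomes the sum of three contributions that we will handle separately.

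For the \emph{zero-frequency main term}, after multiplying the leading expression in \eqref{variant to HN 2} by the prefactor $\tfrac{M^{3/2}t^{3/2}}{NP^2}\bar{\chi}(p)p^{it}\chi(\ell)\ell^{-it}$ of $\mathcal{F}_1$, the character factors collapse via $\bar{\chi}(p)\chi(\ell)\chi(p\bar{\ell})=1$, and the phases satisfy
\begin{equation*}
p^{it}\ell^{-it}\Bigl(\tfrac{2\pi p}{M\ell t}\Bigr)^{-it} = \Bigl(\tfrac{2\pi}{Mt}\Bigr)^{-it},
\end{equation*}
which is independent of $p,\ell$. The surviving coefficient simplifies to $p/(P^2\ell)$, so the $p$- and $\ell$-sums decouple from the $n$-sum and, by the Prime Number Theorem, $\sum_{p\sim P}p/P^2\asymp 1/\log P$ and $\sum_{\ell\sim L}1/\ell\asymp 1/\log L$. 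Since $|g_{\bar{\chi}}/\sqrt{M}|=1$, all remaining factors are unimodular, and the $\log$'s are absorbed into the symbol $\asymp$, producing precisely $\frac{1}{\log P\log L}\sum_n \lambda(1,n)\chi(n)n^{-it}w(n/N)V_A(2\pi n/N)$.

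For the \emph{non-zero frequency piece}, an identical phase computation gives $p^{it}\ell^{-it}(Np/M\ell t)^{-it}=(N/Mt)^{-it}$, and the coefficient becomes $\tfrac{t^{1/2}p}{M^{1/2}P^2\ell}\asymp \tfrac{t^{1/2}}{M^{1/2}PL}$; this yields $\mathcal{O}$ as defined in \eqref{the O1 sum}, with the $(N/Mt)^{-it}$ factor of modulus one absorbed into $\asymp$. For the \emph{error term} in \eqref{variant to HN 2}, the trivial Ramanujan-on-average bound $\sum_{n\leq N}|\lambda(1,n)|\ll N^{1+\varepsilon}$ and the outer sums of length $P$ and $L$ yield a contribution
\begin{equation*}
\frac{M^{3/2}t^{3/2}}{NP^2}\cdot P\cdot L\cdot N\cdot \frac{NP}{M^{3/2}Lt^{1+A}}\ll Nt^{1/2-A},
\end{equation*}
which is exactly the claimed remainder.

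The only real obstacle is the bookkeeping: one must verify that the character twists $\bar{\chi}(p)\chi(\ell)$ imposed on $\mathcal{F}_1$ together with the parasitic $\chi(p\bar{\ell})$ produced by \eqref{variant to HN 2} conspire to cancel exactly, and that the three oscillatory factors $p^{it}$, $\ell^{-it}$, and the $(2\pi p/M\ell t)^{-it}$ emerging from the stationary-phase evaluation telescope into a single $p,\ell$-independent unimodular constant. Without this clean decoupling the $p$- and $\ell$-sums could not be evaluated separately by PNT, and the asymptotic identity \eqref{connection1} would fail; everything else is routine substitution and trivial estimation.
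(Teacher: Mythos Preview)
Your proposal is correct and follows essentially the same route as the paper: substitute \eqref{variant to HN 2} into the $r$-sum of $\mathcal{F}_1$, observe that the character and phase factors in the zero-frequency term collapse to $(p,\ell)$-independent unimodular constants so that the $p$- and $\ell$-sums factor out and are evaluated by PNT, identify the nonzero frequencies with $\mathcal{O}$, and bound the residual error trivially. Your remark on the automatic coprimality $(M,\ell r)=1$ is a helpful clarification that the paper leaves implicit.
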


For any given $\varepsilon>0$, we can make the error term $O\left(Nt^{1/2-A}\right)$ negligibly small by assuming $|t|>M^{\varepsilon}$ and taking $A$ to be sufficiently large. It is easily seen that the function $\varpi(x):=w(x)V_A(2\pi x)$ is an \emph{inert} function (under Definition \ref{definition-of-inert}); see for instance \cite[Example 4]{Kiral-Petrow-Young}. From the lemma, to bound 
\begin{equation*}
\sum_{n=1}^{\infty}\lambda(1,n)\chi(n)n^{-it}w\left(\frac{n}{N}\right)V_A\left(\frac{2\pi n}{N}\right)=\sum_{n=1}^{\infty}\lambda(1,n)\chi(n)n^{-it}\varpi\left(\frac{n}{N}\right),
\end{equation*}
which is our original object of study \eqref{initial sum}, it suffices to bound the terms $\mathcal{F}_1$ and $\mathcal{O}$. Note that a priori $w(x)V_A(2\pi x)$ needs not be an arbitrarily bump function, but this can be done by adjusting the weight function $w(x)$ in our initial definition of $\mathcal{F}_1$ in \eqref{new F1 sum} appropriately (e.g. replacing the original $w(x)$ by $w(x)/V_A(2\pi x)$).

\section{Treatment of $\mathcal{O}$}\label{the treatment of O}
This section is devoted to giving a nontrivial bound for the sum
\begin{equation*}
\begin{split}
\mathcal{O}=&\frac{t^{1/2}}{M^{1/2}PL}\sum_{n=1}^{\infty}\lambda(1,n)w\left(\frac{n}{N}\right)\sum_{p\sim P}\sum_{\ell\sim L}\sum_{r\neq 0}S_{\bar{\chi}}(n,rp\bar{\ell};M)\mathcal{J}_{it}(n,rp/\ell;M),
\end{split}
\end{equation*}
introduced in \eqref{the O1 sum}. Here 
\begin{equation*}
\mathcal{J}_{it}(n,rp/\ell;M)=\int_{\mathbb{R}}V(x)x^{-it}e\left(-\frac{nt}{Nx}\right)e\left(-\frac{rNpx}{M^2\ell t}\right)\mathrm{d}x,
\end{equation*}defined in \eqref{J-integral}.

Our goal is to improve the bound $\mathcal{O}=O\left(N^{1+\varepsilon}\right)$.

For $r\neq 0$, integrating by parts implies that the integral $\mathcal{J}_{it}(n,rp/\ell;M)$
is negligibly small, unless $0\neq |r|\leq N^{\varepsilon} \frac{M^2t^2L}{NP}$ (by \cite[Lemma 8.1]{BKY}). Moreover, using the second derivative test (\cite[Lemma 5.1.3]{Huxley}) we find that $\mathcal{J}_{it}(n,rp/\ell;M)\ll t^{-1/2+\varepsilon}$. 

To estimate $\mathcal{O}$, by a dyadic subdivision, it suffices to bound the sum
\begin{equation*}
\begin{split}
\mathcal{O}(R):=&\frac{t^{1/2}}{M^{1/2}PL}\sum_{n=1}^{\infty}\lambda(1,n)w\left(\frac{n}{N}\right)\sum_{p\sim P}\sum_{\ell\sim L}\sum_{r\sim R}S_{\bar{\chi}}(n,rp\bar{\ell};M)\mathcal{J}_{it}(n,rp/\ell;M),
\end{split}
\end{equation*}
where $R$ satisfies
\begin{equation*}
\begin{split}
1\ll R \ll N^{\varepsilon} \frac{M^2t^2L}{NP}.
\end{split}
\end{equation*}

By the Cauchy--Schwarz inequality and Lemma \ref{Rankin--Selberg}, 
\begin{equation}\label{O1(R)}
\begin{split}
\mathcal{O}(R)\ll& \frac{N^{1/2+\varepsilon}t^{1/2}}{M^{1/2}PL}\left(\sum_{n=1}^{\infty}\left|\sum_{p\sim P}\sum_{\ell\sim L}
\sum_{r\sim R}S_{\bar{\chi}}(n,rp\bar{\ell};M)\mathcal{J}_{it}(n,rp/\ell;M)\right|^2w\left(\frac{n}{N}\right)\right)^{1/2}\\
=&\frac{N^{1/2+\varepsilon}t^{1/2}}{M^{1/2}PL}\bigg(\sum_{p_1\sim P}\sum_{p_2\sim P}\sum_{\ell_1\sim L}\sum_{\ell_2\sim L}\sum_{r_1 \sim R}\sum_{r_2\sim R}\\
&\sum_{n=1}^{\infty}S_{\bar{\chi}}(n,r_1p_1\bar{\ell_1};M)\overline{S_{\bar{\chi}}(n,r_2p_2\bar{\ell_2};M)}\mathcal{J}_{it}(n,r_1p_1/\ell_1;M)\overline{\mathcal{J}_{it}(n,r_2p_2/\ell_2;M)}\,w\left(\frac{n}{N}\right)\bigg)^{1/2}.
\end{split}
\end{equation}

Next, we apply Poisson summation to the $n$-sum, yielding
\begin{equation*}
\begin{split}
&\sum_{n=1}^{\infty}S_{\bar{\chi}}(n,r_1p_1\bar{\ell_1};M)\overline{S_{\bar{\chi}}(n,r_2p_2\bar{\ell_2};M)}\mathcal{J}_{it}(n,r_1p_1/\ell_1;M)\overline{\mathcal{J}_{it}(n,r_2p_2/\ell_2;M)}\,w\left(\frac{n}{N}\right)\\
=&\frac{N}{M}\sum_{n\in\mathbb{Z}}\sum_{a(M)}S_{\bar{\chi}}(a,r_1p_1\bar{\ell_1};M)\overline{S_{\bar{\chi}}(a,r_2p_2\bar{\ell_2};M)}\,e\left(\frac{an}{M}\right)\\
&\quad\quad\int_{\mathbb{R}}\mathcal{J}_{it}(Ny,r_1p_1/\ell_1;M)\overline{\mathcal{J}_{it}(Ny,r_2p_2/\ell_2;M)}\,w\left(y\right)e\left(-\frac{nN}{M}y\right)\mathrm{d}y.
\end{split}
\end{equation*}
Taking into account the oscillations of $\mathcal{J}_{it}(Ny,r_1p_1/\ell_1;M)$ and $\overline{\mathcal{J}_{it}(Ny,r_2p_2/\ell_2;M)}$, the integral is arbitrarily small for $n\neq 0$ (since $N\gg (Mt)^{1+\varepsilon}$). Hence only the zero frequency contributes significantly for the dual sum:
\begin{equation}\label{n sum after Poisson1}
\begin{split}
&\sum_{n=1}^{\infty}S_{\bar{\chi}}(n,r_1p_1\bar{\ell_1};M)\overline{S_{\bar{\chi}}(n,r_2p_2\bar{\ell_2};M)}\mathcal{J}_{it}(n,r_1p_1/\ell_1;M)\overline{\mathcal{J}_{it}(n,r_2p_2/\ell_2;M)}\,w\left(\frac{n}{N}\right)\\
&=\frac{N}{M}\mathfrak{C}\,\mathfrak{J}+O(N^{-2018}),
\end{split}
\end{equation}
where
\begin{equation*}
\begin{split}
\mathfrak{C}=\sum_{a(M)}S_{\bar{\chi}}(a,r_1p_1\bar{\ell_1};M)\overline{S_{\bar{\chi}}(a,r_2p_2\bar{\ell_2};M)}
=&M\sumstar_{\beta(M)}e\left(\frac{(r_1p_1\bar{\ell_1}-r_2p_2\bar{\ell_2})\beta}{M}\right)\\
=&M\left[M\delta_{\ell_2r_1p_1\equiv \ell_1r_2p_2(M)}-1\right]
\end{split}
\end{equation*}
and
\begin{equation}\label{the integral J1}
\begin{split}
\mathfrak{J}=\int_{\mathbb{R}}\mathcal{J}_{it}(Ny,r_1p_1/\ell_1;M)\overline{\mathcal{J}_{it}(Ny,r_2p_2/\ell_2;M)}\,w\left(y\right)\mathrm{d}y.
\end{split}
\end{equation}

One readily sees that 
\begin{equation}\label{C1 character sum}
\mathfrak{C}= \begin{cases} O(M^2), & \quad  \ell_1 r_2p_2\equiv \ell_2 r_1p_1(M)\\
O(M), & \quad \mathrm{otherwise}.\\
\end{cases}
\end{equation}

For the integral $\mathfrak{J}$, if we use the previously mentioned second derivative bound $\mathcal{J}_{it}(n,rp/\ell;M)\ll t^{-1/2+\varepsilon}$ we get $\mathfrak{J}\ll t^{-1+\varepsilon}$. However, there are more cancellations beyond $O(t^{-1+\varepsilon})$, as long as the parameters $(r_i,p_i,\ell_i)$ satisfy $r_1p_1\ell_2\neq r_2p_2\ell_1$. Indeed, we have the following precise estimate in terms of $(r_i,p_i,\ell_i)$.
\begin{lem}\label{bound of J}
For $\mathfrak{J}$ defined as in \eqref{the integral J1}, we have
\begin{equation*}
\begin{split}
\mathfrak{J}\ll t^{\varepsilon}(\max\left\{t,|X|\right\})^{-1},
\end{split}
\end{equation*}
where $X:=\frac{N(  \ell_2 r_1p_1-\ell_1 r_2p_2)}{M^2\ell_1\ell_2}$.
\end{lem}
This can be proven by using stationary phase expansion for each of the $\mathcal{J}_{it}(Ny,rp/\ell;M)$ in the definition of $\mathfrak{J}$ and then applying the first derivative test. The following simpler proof is suggested to us by the referee.
\begin{proof}
By definition \eqref{J-integral} of $\mathcal{J}_{it}(Ny,rp/\ell;M)$, we express $\mathfrak{J}$ as  $\mathfrak{J}=\int V(u)V(v)w(y)e(f(u)-f(v))\mathrm{d}y\mathrm{d}v\mathrm{d}u$. Observe that
$$f(u)-f(v)=\frac{t}{uv}\left(u-v\right)y+\left(-\frac{t\log u}{2\pi}+\frac{t\log v}{2\pi}+\frac{Nr_2p_2}{M^2t\ell_2}v-\frac{Nr_1p_1}{M^2t\ell_1}u\right)$$
and the integral $\int w(y)e(ty(u-v)/uv)\mathrm{d}y$ is negligible if $|u-v|\gg t^{\varepsilon-1}$. Let $q$ be a non-negative smooth function supported on $[-1,1]$ which takes the value $1$ on $[-1/2,1/2]$. For arbitrarily large constant $A$, set 
$$Q(u-v):=q\left(\frac{u-v}{t^{\varepsilon-1}}\right)\left(1-q\left(\frac{u-v}{t^{-A}}\right)\right),$$
supported in $t^{-A}\ll |u-v|\ll t^{\varepsilon-1}$. Then $$\mathfrak{J}=\int Q(u-v)V(u)V(v)w(y)e(f(u)-f(v))\mathrm{d}y\mathrm{d}v\mathrm{d}u+O_{A,\varepsilon}(t^{-A}).$$
Integrating by parts, the integral is
\begin{equation}\label{expansion of J}
    \frac{1}{2\pi i\cdot t}\int Q(u-v)\frac{uV(u)vV(v)}{u-v}w^\prime(y)e(f(u)-f(v))\mathrm{d}y\mathrm{d}v\mathrm{d}u.
\end{equation}
If $|X|\ll t^{1+2\varepsilon}$, then we are done for the integral in \eqref{expansion of J} is trivially $
\ll \log t$. Otherwise, a change of variable $h=u-v$ leads the integral to
$$\int \frac{Q(h)}{h}w^\prime(y)(v+h)vV(v+h)V(v)e(f(v+h)-f(v))\mathrm{d}v\mathrm{d}h\mathrm{d}y.$$
Now 
$$f(v+h)-f(v)=\frac{X}{t}v-\frac{t}{2\pi}\log (1+\frac{h}{v})+th\frac{y}{v(v+h)}-\frac{Nr_1p_1}{M^2t\ell_1}h,$$
hence
$$\left|\frac{d}{dv}(f(v+h)-f(v))\right|\gg \frac{|X|}{t}-O(th)\gg \frac{|X|}{t}-O(t^{\varepsilon})\gg \frac{|X|}{t}.$$
It follows by the first derivative test that the expression in \eqref{expansion of J} $\ll |X|^{-1}\log t$.
\end{proof}

\begin{rem}
For $\ell_1 r_2p_2\neq \ell_2 r_1p_1$ and $r_i\sim \frac{M^2t^2L}{NP}$, typically $|X|^{-1}\asymp t^{-2}$,
so that the second bound of the lemma shows that we save an extra $t$ over the `trivial bound' $O(t^{-1+\varepsilon})$.
The estimation of this lemma is an analytic analogue of the bound \eqref{C1 character sum}. 
\end{rem}

Now we return to the estimate of $\mathcal{O}(R)$, in \eqref{O1(R)}.
Plugging the $n$-sum \eqref{n sum after Poisson1} into $\mathcal{O}(R)$, up to a negligible error, we have
\begin{equation}\label{counting1}
\begin{split}
\mathcal{O}(R)\ll&\frac{N^{1/2+\varepsilon}t^{1/2}}{M^{1/2}PL}\left(\sum_{p_1\sim P}\sum_{p_2\sim P}\sum_{\ell_1\sim L}\sum_{\ell_2\sim L}\sum_{r_1\sim R}\sum_{r_2\sim R}\frac{N}{M}|\mathfrak{C}|\,|\mathfrak{J}|\right)^{1/2}\\
\ll&\frac{N^{1+\varepsilon}t^{1/2}}{MPL}\bigg(\mathop{\sum_{p_1\sim P}\sum_{p_2\sim P}\sum_{\ell_1\sim L}\sum_{\ell_2\sim L}\sum_{r_1\sim R}\sum_{r_2\sim R}}_{\ell_1 r_2p_2\equiv \ell_2 r_1p_1(M)}M^2|\mathfrak{J}|\\
&\quad\quad+\mathop{\sum_{p_1\sim P}\sum_{p_2\sim P}\sum_{\ell_1\sim L}\sum_{\ell_2\sim L}\sum_{r_1\sim R}\sum_{r_2\sim R}}_{\ell_1 r_2p_2\not \equiv \ell_2 r_1p_1(M)}M\frac{M^2\ell_1\ell_2}{N\left| \ell_1 r_2p_2- \ell_2 r_1p_1\right|}\bigg)^{1/2},
\end{split}
\end{equation}
by using \eqref{C1 character sum} and Lemma \ref{bound of J}. We remind the reader that $R$ satisfies $1\leq R \ll N^{\varepsilon} \frac{M^2t^2L}{NP}$.

Using Lemma \ref{bound of J} again, the first term inside the parentheses is bounded by 
\begin{equation*}
\begin{split}
& M^2t^{\varepsilon}\mathop{\sum_{p_1\sim P}\sum_{p_2\sim P}\sum_{\ell_1\sim L}\sum_{\ell_2\sim L}\sum_{r_1\sim R}\sum_{r_2\sim R}}_{\ell_1 r_2p_2= \ell_2 r_1p_1}t^{-1}
+\frac{M^4L^2t^{\varepsilon}}{N}\mathop{\sum_{p_1\sim P}\sum_{p_2\sim P}\sum_{\ell_1\sim L}\sum_{\ell_2\sim L}\sum_{r_1\sim R}\sum_{r_2\sim R}}_{\substack{\ell_1 r_2p_2\neq \ell_2 r_1p_1\\ \ell_1 r_2p_2\equiv \ell_2 r_1p_1(M)}}\frac{1}{\left| \ell_1 r_2p_2- \ell_2 r_1p_1\right|},
\end{split}
\end{equation*}
which is further dominated by
\begin{equation*}
\begin{split}
&\ll N^{\varepsilon}M^2t^{-1}PLR+N^{\varepsilon}\frac{M^4L^2}{N}\cdot\frac{LPR}{M}\\
&\ll N^{\varepsilon}\frac{M^4tL^2}{N}+N^{\varepsilon}\frac{M^5t^2L^4}{N^2},
\end{split}
\end{equation*}
by using Lemma \ref{spacing} and by noting that $R \ll N^{\varepsilon} \frac{M^2t^2L}{NP}$.


Similarly, the second term inside the parentheses of \eqref{counting1} is bounded by
\begin{equation*}
\begin{split}
\frac{M^3L^2}{N}\mathop{\sum_{p_1\sim P}\sum_{p_2\sim P}\sum_{\ell_1\sim L}\sum_{\ell_2\sim L}\sum_{r_1\sim R}\sum_{r_2\sim R}}_{\ell_1 r_2p_2\neq \ell_2 r_1p_1}\frac{1}{\left|\ell_1 r_2p_2-\ell_2 r_1p_1\right|}
&\ll N^{\varepsilon}\frac{M^3L^2}{N}\cdot PLR\ll N^{\varepsilon}\frac{M^5t^2L^4}{N^2},
\end{split}
\end{equation*}
upon using Lemma \ref{spacing}.

Returning to the estimate of $\mathcal{O}(R)$, we have shown that
\begin{equation*}
\begin{split}
\mathcal{O}(R)\ll&\frac{N^{1+\varepsilon}t^{1/2}}{MPL}\left(\frac{M^4tL^2}{N}+\frac{M^5t^2L^4}{N^2}\right)^{1/2}
\ll \frac{N^{1/2+\varepsilon}Mt}{P}+N^{\varepsilon}\frac{M^{3/2}t^{3/2}L}{P},
\end{split}
\end{equation*}
which holds for any $1\leq R \ll N^{\varepsilon} \frac{M^2t^2L}{NP}$,


We summarize the main result of this section.
\begin{prop}\label{final bound for O}
 For any $\varepsilon>0$, we have the bound
\begin{equation*}
\mathcal{O}\ll \frac{N^{1/2+\varepsilon}Mt}{P}+N^{\varepsilon}\frac{M^{3/2}t^{3/2}L}{P},
\end{equation*}
for $\mathcal{O}$ defined as in \eqref{the O1 sum}.
\end{prop}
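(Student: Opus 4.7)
My plan is to reduce the bound for $\mathcal{O}$ to a second-moment estimate and then evaluate the resulting character sum and oscillatory integral essentially explicitly. First, I would apply integration by parts on $\mathcal{J}_{it}(r,np/\ell;M)$ to show that the integral is negligible outside the range $|r|\ll N^{\varepsilon}M^{2}t^{2}L/(NP)$, which truncates the $r$-sum. Then I would dyadically decompose the $r$-sum and consider $\mathcal{O}(R)$ for $1\leq R\ll N^{\varepsilon}M^{2}t^{2}L/(NP)$. Applying Cauchy--Schwarz to remove the Fourier coefficients $\lambda(1,n)$, and using the Rankin--Selberg bound $\sum_{n}|\lambda(1,n)|^{2}w(n/N)\ll N^{1+\varepsilon}$, shifts the problem to estimating the second moment over $n$ of the inner $p,\ell,r$ sum.

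Next I would open the square and apply Poisson summation to the $n$-sum, which has period $M$ in the Kloosterman factors. The dual frequency $n$ lives in a range where the oscillatory phases of $\mathcal{J}_{it}(r_{1},\cdot)\overline{\mathcal{J}_{it}(r_{2},\cdot)}$ dominate the additive twist $e(-nNy/M)$, so by integration by parts only the zero frequency $n=0$ survives. The character sum $\mathfrak{C}$ simplifies after opening the two Kloosterman sums to a congruence $\ell_{1}r_{2}p_{2}\equiv \ell_{2}r_{1}p_{1}\pmod{M}$: one gets $|\mathfrak{C}|\ll M^{2}$ when the congruence holds and $|\mathfrak{C}|\ll M$ otherwise. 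For the remaining integral $\mathfrak{J}$, the trivial second-derivative bound $\mathcal{J}_{it}\ll t^{-1/2}$ only yields $\mathfrak{J}\ll t^{-1}$; the crucial improvement is to run stationary phase on each $\mathcal{J}_{it}$ separately, substitute, and then apply a first-derivative bound to the resulting oscillatory integral, giving $\mathfrak{J}\ll M^{2}\ell_{1}\ell_{2}/(N|\ell_{1}r_{2}p_{2}-\ell_{2}r_{1}p_{1}|)$ when the difference is nonzero. This is the content of Lemma \ref{bound of J}.

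With these two ingredients in hand, the remaining task is combinatorial: split the hexuple sum over $p_{1},p_{2},\ell_{1},\ell_{2},r_{1},r_{2}$ into three ranges according to whether $\ell_{1}r_{2}p_{2}=\ell_{2}r_{1}p_{1}$, is merely congruent modulo $M$, or neither. The diagonal $\ell_{1}r_{2}p_{2}=\ell_{2}r_{1}p_{1}$ is handled by a divisor count of size $PLR$; the near-diagonal and off-diagonal ranges are handled by the spacing estimate in Lemma \ref{spacing}, which uses an elementary count of solutions to $\ell_{1}m_{2}-\ell_{2}m_{1}=i$ with $(\ell_{1},\ell_{2})=1$ in place of a more refined exponential-sum analysis. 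Assembling these bounds and inserting $R\ll N^{\varepsilon}M^{2}t^{2}L/(NP)$, the dominant contributions reduce to $M^{4}tL^{2}/N+M^{5}t^{2}L^{4}/N^{2}$ inside the square root. The main obstacle I anticipate is verifying that the stationary phase expansion of $\mathcal{J}_{it}$ is uniform in all the parameters $(r,p,\ell)$ and that the phase function $\phi(y)=\log((-1+\sqrt{1+z_{1}y})/(-1+\sqrt{1+z_{2}y}))+\sqrt{1+z_{1}y}-\sqrt{1+z_{2}y}$ genuinely has derivative $\gg|z_{1}-z_{2}|$ on the support — a short computation, but essential for the key extra saving of a factor of $t$ beyond the trivial bound.
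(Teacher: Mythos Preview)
Your proposal is correct and follows essentially the same approach as the paper: Cauchy--Schwarz, Poisson in $n$ with only the zero frequency surviving, the explicit evaluation of $\mathfrak{C}$, the stationary-phase-plus-first-derivative bound for $\mathfrak{J}$ (Lemma~\ref{bound of J}), and then the three-way split handled via Lemma~\ref{spacing}. Even your anticipated obstacle --- the uniform stationary-phase expansion and the computation $\partial_y\phi(y)=\frac{z_1-z_2}{2(\sqrt{1+z_1y}+\sqrt{1+z_2y})}\gg|z_1-z_2|$ --- is exactly what the paper verifies in the proof of Lemma~\ref{bound of J}.
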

\begin{rem}
If we only use the `trivial' bound $\mathfrak{J}\ll t^{-1+\varepsilon}$ for the estimate of the integral $\mathfrak{J}$, then one will see that for the second term we get $O\left(N^{\varepsilon}M^{3/2}t^{2}L/P\right)$ instead. It is thus crucial to use Lemma \ref{bound of J} to get an extra $t^{1/2}$ saving in order to beat the convexity bound for $L(1/2+it,\pi\otimes \chi)$ in the $t$-aspect.
\end{rem}

\section{Treatment of $\mathcal{F}_1$}
The purpose of this section is to give a nontrivial bound for 
\begin{equation*}
\begin{split}
\mathcal{F}_1=&\frac{M^{3/2}t^{3/2}}{NP^2}\sum_{p\sim P}\bar{\chi}(p)p^{it}\sum_{\ell\sim L}\chi(\ell)\ell^{-it}\sum_{r=1}^{\infty}\chi(r)r^{-it}V\left(\frac{r}{Np/M\ell t}\right)
\sum_{n=1}^{\infty}\lambda(1,n)e\left(-\frac{np\bar{M}}{\ell r}\right)w\left(\frac{n}{N}\right),
\end{split}
\end{equation*}
defined in \eqref{new F1 sum}, where $w$ and $V$ are smooth compactly supported functions with bounded derivatives.

Our goal is to improve the bound $\mathcal{F}_1=O\left(N^{1+\varepsilon}\right)$.

Bounding the sum directly with Miller's bound \eqref{Miller's bound}, we have $\mathcal{F}_1\ll N^{3/4+\varepsilon}(Mt)^{1/2}$, which is not satisfactory yet for our purpose. 

We shall apply the Voronoi summation to the $n$-sum. To this end, one may assume $(p,r)=1$ in $\mathcal{F}_1$, as the contribution from the terms $(p,r)>1$ is negligible, compared to the generic terms $(p,r)=1$. We briefly justify this. Denote the terms with $p|r$ in $\mathcal{F}_1$ by $\mathcal{F}_1^{\sharp}$. Then,
\begin{equation*}
\begin{split}
\mathcal{F}_1^{\sharp}=\frac{M^{3/2}t^{3/2}}{NP^2}\sum_{p\sim P}\sum_{\ell\sim L}\chi(\ell)\ell^{-it}\sum_{r=1}^{\infty}\chi(r)r^{-it}V\left(\frac{r}{N/M\ell t}\right)\sum_{n=1}^{\infty}\lambda(1,n)e\left(-\frac{n\bar{M}}{\ell r}\right)w\left(\frac{n}{N}\right).
\end{split}
\end{equation*}
An application of Voronoi summation \eqref{Voronoi0} takes the $n$-sum to the following dual sum
\begin{equation*}
\begin{split}
\ell r\sum_{\pm}\sum_{m|\ell r}\sum_{n=1}^{\infty}\frac{\lambda(n,m)}{mn}S(M,\pm n;\ell r/m)\,\mathcal{U}^{\pm}\left(\frac{m^2n}{(\ell r)^{3}/N}\right),
\end{split}
\end{equation*}
where the new length can be truncated at $m^2n<N^{\varepsilon}(\ell r)^3/N$, at the cost of a negligible error.

Hence we can estimate $\mathcal{F}_1^{\sharp}$ as follows.
\begin{equation*}
\begin{split}
\mathcal{F}_1^{\sharp}&\asymp \frac{M^{3/2}t^{3/2}}{NP\log P}\bigg|\sum_{\ell\sim L}\chi(\ell)\ell^{-it}\sum_{r=1}^{\infty}\chi(r)r^{-it}V\left(\frac{r}{N/M\ell t}\right)\\
&\quad\quad \ell r\sum_{m|\ell r}\sum_{n=1}^{\infty}\frac{\lambda(m,n)}{mn}S(M,n;\ell r/m)\,\mathcal{U}^{+}\left(\frac{m^2n}{(\ell r)^{3}/N}\right)\bigg|\\
&\ll N^{\varepsilon}\frac{M^{3/2}t^{3/2}}{NP}\sum_{\ell\sim L}\sum_{r\sim N/MLt}\ell r\mathop{\sum\sum}_{m^2n<(\ell r)^3/N}\frac{|\lambda(m,n)|}{mn}\left(\frac{\ell r}{m}\right)^{1/2}\\
&\ll N^{\varepsilon}\frac{N^{3/2}}{PMt},
\end{split}
\end{equation*}
upon using Weil's bound. This bound turns out to be satisfactory for our purpose.

From now on we assume that $(p,\ell r)=1$. Then, an application of Voronoi summation \eqref{Voronoi0} to the $n$-sum yields
\begin{equation*}
\begin{split}
\sum_{n=1}^{\infty}\lambda(1,n)e\left(-\frac{np\bar{M}}{\ell r}\right)w\left(\frac{n}{N}\right)
=\ell r\sum_{\pm}\sum_{m|\ell r}\sum_{n=1}^{\infty}\frac{\lambda(n,m)}{mn}S(\bar{p}M,\pm n;\ell r/m)\,\mathcal{U}^{\pm}\left(\frac{m^2nN}{(\ell r)^{3}}\right).
\end{split}
\end{equation*}
Here the contribution from the terms with $m^2n\gg N^{\varepsilon}(\ell r)^3/N$ is negligibly small, and we can truncate the $(m,n)$-sum at $m^2n\ll N^{2+\varepsilon}P^3/M^3t^3$, at the cost of a negligible error. For those $m^2n\ll N^{2+\varepsilon}P^3/M^3t^3$, the result $|\Re \alpha_i|< 1/2$ for the Langlands parameter $\boldsymbol{\alpha}=(\alpha_1,\alpha_2,\alpha_3)$ of Jacquet and Shalika gives us the bound
\begin{equation*}
\mathcal{U}^{\pm}\left(\frac{m^2nN}{\ell^3 r^{3}}\right)\ll \sqrt{\frac{m^2nN}{\ell^3 r^{3}}},
\end{equation*}
while in general we have $y^{j}\mathcal{U}^{\pm,(j)}\left(y\right)\ll \sqrt{y}$.

After applying Voronoi summation, we have
\begin{equation*}
\begin{split}
\mathcal{F}_1=&\frac{M^{3/2}t^{3/2}}{NP^2}\sum_{\pm}\sum_{p\sim P}\bar{\chi}(p)p^{it}\sum_{\ell\sim L}\chi(\ell)\ell^{-it}\sum_{r=1}^{\infty}\chi(r)r^{-it}V\left(\frac{r}{Np/M\ell t}\right)\\
&\quad \ell r\mathop{\sum_{m|\ell r}\sum_{n\geq 1}}_{m^2n\ll N^{2+\varepsilon}P^3/M^3t^3}\frac{\lambda(n,m)}{mn}S(\bar{p}M,\pm n;\ell r/m)\,\mathcal{U}^{\pm}\left(\frac{m^2nN}{\ell^3 r^{3}}\right)+O\left(\frac{N^{3/2+\varepsilon}}{PMt}\right)\\
:=&\sum_{\pm}\mathcal{F}^{\pm}_1+O\left(\frac{N^{3/2+\varepsilon}}{PMt}\right).
\end{split}
\end{equation*}
Consider for example one of the two sums, $\mathcal{F}^{+}_1$. Pulling the $\ell$-sum inside the $(m,n)$-sum and applying the Cauchy--Schwarz inequality to $\mathcal{F}^{+}_1$, we obtain
 \begin{equation*}
\begin{split}
\mathcal{F}^{+}_1\asymp&\frac{(Mt)^{1/2}}{P}\bigg|\sum_{r=1}^{\infty}\chi(r)r^{-it}V\left(\frac{r}{NP/MLt}\right)\mathop{\sum\sum}_{\substack{m,n\geq 1\\ m^2n\ll N^{2+\varepsilon}P^3/M^3t^3}}\frac{\lambda(n,m)}{mn}\\
&\quad\quad\quad\sum_{\substack{\ell\sim L\\ m|\ell r}}\chi(\ell)\ell^{-it}\sum_{p\sim P}\bar{\chi}(p)p^{it}S(\bar{p}M,n;\ell r/m)\,\mathcal{U}^{+}\left(\frac{m^2nN}{\ell^3 r^{3}}\right)\bigg|\\
\ll&\frac{N^{1/2+\varepsilon}}{P^{1/2}L^{1/2}}\,\Sigma^{1/2},
\end{split}
\end{equation*}
where
\begin{equation}\label{diagonal+off-diagonal 1}
\begin{split}
\Sigma:=&\sum_{\substack{r\sim NP/MLt\\ (r,M)=1}}\mathop{\sum\sum}_{\substack{m,n\geq 1\\ m^2n\ll N^2P^3/M^3t^3}}\frac{1}{mn}\left|\sum_{\substack{\ell\sim L\\ m|\ell r}}\chi(\ell)\ell^{-it}\sum_{p\sim P}\bar{\chi}(p)p^{it}S(\bar{p}M,n;\ell r/m)\,\mathcal{U}^{+}\left(\frac{m^2nN}{\ell^3 r^{3}}\right)\right|^2,
\end{split}
\end{equation}
by noting that 
\begin{equation*}
\begin{split}
\mathop{\sum\sum}_{\substack{m,n\geq 1\\ m^2n\ll N^2P^3/M^3t^3}}\frac{|\lambda(n,m)|^2}{mn}\ll N^{\varepsilon}.
\end{split}
\end{equation*}

Now it remains to treat the sum $\Sigma$. Opening the square and interchanging the order of summations, we find
\begin{equation*}
\begin{split}
\Sigma \leq&\sum_{r\sim NP/MLt}\sum_{m<NP^{3/2}/M^{3/2}t^{3/2}}\frac{1}{m}\sum_{\substack{\ell_1\sim L\\ m| \ell_1 r}}\sum_{\substack{\ell_2\sim L\\ m| \ell_2 r}}\sum_{p_1\sim P}\sum_{p_2\sim P}\\
&\bigg|\sum_{n\ll  N^2P^3/m^2M^3t^3}\frac{1}{n}S(\bar{p_1}M,n;\ell_1 r/m)S(\bar{p_2}M,n;\ell_2 r/m)\,\mathcal{U}^{+}\left(\frac{m^2nN}{\ell_1^3\, r^{3}}\right)\overline{\mathcal{U}^{+}\left(\frac{m^2nN}{\ell_2^3 \,r^{3}}\right)}\bigg|.
\end{split}
\end{equation*}

Our next step is to apply Poisson summation to the $n$-sum. To this end, by a smooth dyadic partition of unity, one can insert an nonnegative smooth function $F(x)$ which is supported on, say $[1/2,3]$, and constantly $1$ on $[1,2]$, into the $n$-sum. 

Now for any
\begin{equation}\label{N_m1}
\mathcal{N}_m\ll N^2P^3/m^2M^3t^3,
\end{equation}
an application of Poisson summation with modulus $[\ell_1r/m,\ell_2 r/m]$ gives
\begin{equation*}
\begin{split}
&\sum_{n\geq 1}\frac{1}{n}S(\bar{p_1}M,n;\ell_1 r/m)S(\bar{p_2}M,n;\ell_2 r/m)\,\mathcal{U}^{+}\left(\frac{m^2nN}{\ell_1^3\, r^{3}}\right)\overline{\mathcal{U}^{+}\left(\frac{m^2nN}{\ell_2^3\, r^{3}}\right)}F\left(\frac{n}{\mathcal{N}_m}\right)\\
&=\frac{1}{[\ell_1,\ell_2] r/m}\sum_{n\in\mathbb{Z}}\mathcal{C}_{\ell_1,\ell_2}(n)\mathcal{T}(n,\ell_1,\ell_2),
\end{split}
\end{equation*}
where
\begin{equation}\label{C ell_1 ell_2}
\mathcal{C}_{\ell_1,\ell_2}(n)=\sum_{a([\ell_1,\ell_2]\frac{ r}{m})}S(\bar{p}_1M,a;\ell_1r/m)S(\bar{p}_2M,a;\ell_2r/m)e\left(\frac{an}{[\ell_1,\ell_2] r/m}\right),
\end{equation}
and
\begin{equation*}
\mathcal{T}(n,\ell_1,\ell_2)=\int_{\mathbb{R}}F(x)\,\mathcal{U}^{+}\left(\frac{m^2\mathcal{N}_mNx}{\ell_1^3\, r^{3}}\right)\overline{\mathcal{U}^{+}\left(\frac{m^2\mathcal{N}_mNx}{\ell_2^3\, r^{3}}\right)}e\left(-\frac{n\mathcal{N}_m}{[\ell_1,\ell_2] r/m}x\right)\frac{\mathrm{d}x}{x}.
\end{equation*}

By integrating by parts repeatedly, the integral $\mathcal{T}(n,\ell_1,\ell_2)$ is negligibly small if $|n|\geq  N^{\varepsilon}\frac{[\ell_1,\ell_2] r}{m\mathcal{N}_m}$. Therefore we can truncate the dual $n$-sum at $N^{\varepsilon}\frac{[\ell_1,\ell_2] r}{m\mathcal{N}_m}$, at the cost of a negligible error. Meanwhile in the range $|n|\ll N^{\varepsilon}\frac{[\ell_1,\ell_2] r}{m\mathcal{N}_m}$, we use the bounds $y^{j}\mathcal{U}^{+,(j)}\left(y\right)\ll \sqrt{y}$ to obtain
\begin{equation}\label{bound of T1} 
\mathcal{T}(n,\ell_1,\ell_2)\ll\frac{m^2\mathcal{N}_mN}{(\ell_1\ell_2)^{3/2}r^3}\ll \frac{m^2\mathcal{N}_mN}{(NP/Mt)^3}.
\end{equation} 

Let us also observe in particular that
\begin{equation*}
\mathcal{T}(n,\ell_1,\ell_2)\ll 1 \quad \mbox{and}\quad \mathcal{N}_1\ll N^2P^3/M^3t^3
\end{equation*}
for later convenience.

We arrive at
\begin{equation}\label{F1 after Cauchy1}
\begin{split}
\mathcal{F}_1
\ll&\frac{N^{1/2+\varepsilon}}{P^{1/2}L^{1/2}}\,\Omega^{1/2}+\frac{N^{3/2+\varepsilon}}{PMt},
\end{split}
\end{equation}
where
\begin{equation*}
\begin{split}
\Omega=\sum_{r\sim NP/MLt}\sum_{m<NP^{3/2}/M^{3/2}t^{3/2}}\sum_{\substack{\ell_1\sim L\\ m| \ell_1 r}}\sum_{\substack{\ell_2\sim L\\ m| \ell_2 r}}\sum_{p_1\sim P}\sum_{p_2\sim P}\sum_{|n|<[\ell_1,\ell_2] r/m\mathcal{N}_m}\frac{1}{[\ell_1,\ell_2] r}\,|\mathcal{C}_{\ell_1,\ell_2}(n)\mathcal{T}(n,\ell_1,\ell_2)|.
\end{split}
\end{equation*}

We have essentially square-root cancellation for the character sum $\mathcal{C}_{\ell_1,\ell_2}(n)$, defined in \eqref{C ell_1 ell_2}. The details of this calculation were carried out in \cite{HN17}. We have collected their results relevant to our present setting in Lemma \ref{ell_1 not ell_2}.

Bounding our sum \eqref{C ell_1 ell_2} using Lemma \ref{ell_1 not ell_2}, we get
\begin{equation}\label{character sum}
|\mathcal{C}_{\ell_1,\ell_2}(n)|\leq 2^{O(\omega(r))}\left(\frac{r}{m}\right)^{3/2} \frac{\ell_1\ell_2}{(\ell_1,\ell_2)^{1/2}}\frac{(\Delta,n,\ell_1r/m,\ell_2r/m)}{(n,\ell_1r/m,\ell_2r/m)^{1/2}},
\end{equation}
where 
\begin{equation*}
\Delta:=\frac{\bar{p_1}\ell_2^2-\bar{p_2}\ell_1^2}{(\ell_1,\ell_2)^2}M,
\end{equation*}
and $\bar{p_1}$ and $\bar{p_2}$ denote the multiplicative inverses of $p_1$ and $p_2$ modulo $\ell_1r/m$ and $\ell_2r/m$, respectively, and $\omega(r)$ denotes the number of distinct prime factors of $r$.

We write 
\begin{equation*}\Omega=\Omega_0+\Omega_1,
\end{equation*}
where $\Omega_0$ denotes the contribution from the terms $n=\Delta=0$, and $\Omega_1$ denotes the complement.
\begin{rem}
In fact, $\Omega_0$ is the diagonal contribution $(\ell_1,p_1)=(\ell_2,p_2)$ to the sum \eqref{diagonal+off-diagonal 1}, and $\Omega_1$ is the off-diagonal $(\ell_1,p_1)\neq (\ell_2,p_2)$ contribution.
\end{rem}

If $\Delta=0$, then $\bar{p_1}\ell_2^2-\bar{p_2}\ell_1^2=0$. Necessarily, $\ell_1=\ell_2:=\ell$ and $p_1=p_2:=p$. Under this condition,
\begin{equation*}
|C_{\ell,\ell}(n)|\leq 2^{O(\omega(r))}\left(\frac{\ell r}{m}\right)^{3/2}\left(n,\frac{\ell r}{m}\right)^{1/2}.
\end{equation*}
In particular, $|C_{\ell,\ell}(0)|\leq 2^{O(\omega(r))}\left(\frac{\ell r}{m}\right)^2$. 
Therefore,
\begin{equation}\label{estimate of omega01}
\begin{split}
\Omega_0\ll&  \sum_{r\sim NP/MLt}\sum_{\ell\sim L}\sum_{m|\ell r}\sum_{p\sim P}2^{O(\omega(r))}\frac{1}{\ell r}\left(\frac{\ell r}{m}\right)^2\,|\mathcal{T}(0,\ell,\ell)|
\ll \frac{N^{2+\varepsilon}P^3}{M^2t^2}.
\end{split}
\end{equation}
Here we have used the fact that $\omega(r)\ll \frac{\log r}{\log \log r}$.

Meanwhile for $\Omega_1$, we further write 
\begin{equation*}
\Omega_1=\Omega_{1a}+\Omega_{1b},
\end{equation*} 
where $\Omega_{1a}$ denotes the contribution coming from the $n\neq 0$ terms, and $\Omega_{1b}$ denotes the contribution of the zero frequency: $n=0,\, \Delta\neq 0$. Plugging the bounds \eqref{bound of T1}  and \eqref{character sum} in, we see that 
\begin{equation*}
\begin{split}
\Omega_{1a}\ll&\sum_{r\sim NP/MLt}\sum_{\ell_1\sim L}\sum_{\ell_2\sim L}\sum_{m|(\ell_1,\ell_2)r}\sum_{p_1\sim P}\sum_{p_2\sim P}\sum_{0\neq |n|<[\ell_1,\ell_2] r/m\mathcal{N}_m}\frac{|\mathcal{C}_{\ell_1,\ell_2}(n)|}{[\ell_1,\ell_2] r}\,|\mathcal{T}(n,\ell_1,\ell_2)|\\
\ll&N^{\varepsilon}\sum_{r\sim NP/MLt}\mathop{\sum_{\ell_1\sim L}\sum_{\ell_2\sim L}}_{\ell_1\neq \ell_2}\sum_{m|r}\mathop{\sum_{p_1\sim P}\sum_{p_2\sim P}}\sum_{0\neq |n|<\ell_1\ell_2 r/m\mathcal{N}_m}\frac{r^{1/2}}{m^{3/2}}\frac{(\Delta,n,r/m)}{(n,r/m)^{1/2}}\frac{m^2\mathcal{N}_mN}{(NP/Mt)^3}\\
\ll& N^{\varepsilon}\frac{NP}{MLt}L^2P^2\frac{NPL}{\mathcal{N}_1Mt}\left(\frac{NP}{MLt}\right)^{1/2}\frac{\mathcal{N}_1N}{(NP/Mt)^3}\\
\ll& N^{\varepsilon}(NMt)^{1/2}(PL)^{3/2}.
\end{split}
\end{equation*} 
Here in the second inequality above we have used the fact that the contribution from the case $\ell_1=\ell_2$ is comparably smaller.

Now we treat the case of $\Omega_{1b}$, which by our definition is
\begin{equation*}
\begin{split}
\Omega_{1b}=\sum_{r\sim NP/MLt}\sum_{m<NP^{3/2}/M^{3/2}t^{3/2}}\sum_{\substack{\ell_1\sim L\\ m| \ell_1 r}}\sum_{\substack{\ell_2\sim L\\ m| \ell_2 r}}\sum_{p_1\sim P}\sum_{p_2\sim P}\frac{1_{\Delta\neq 0}}{[\ell_1,\ell_2] r}\,|\mathcal{C}_{\ell_1,\ell_2}(0)\mathcal{T}(0,\ell_1,\ell_2)|.
\end{split}
\end{equation*}
A direct evaluation of $\mathcal{C}_{\ell_1,\ell_2}(0)$ from the definition \eqref{C ell_1 ell_2} shows that it vanishes unless $\ell_1=\ell_2:=\ell$. In the latter case we have
\begin{equation*}
\begin{split}
\mathcal{C}_{\ell,\ell}(0)=&\frac{\ell r}{m}\sumstar_{\beta(\ell r/m)}e\left(\frac{p_2-p_1}{\ell r/m}\beta\right)\\
\ll& \frac{\ell r}{m}\sum_{h|(\ell r/m,p_1-p_2)}h .
\end{split}
\end{equation*}
Recall for $\ell_1=\ell_2=\ell$, we have $\Delta=(\bar{p_1}-\bar{p_2})M$, where $\bar{p_1}$ and $\bar{p_2}$ are the multiplicative inverses of $p_1$ and $p_2$ modulo $\ell r/m$, respectively. As $\Delta\neq 0$, we have $p_1\neq p_2$.

We thus have
\begin{equation*}
\begin{split}
\Omega_{1b}\ll& N^{\varepsilon}\sum_{r\sim NP/MLt}\sum_{\ell\sim L}\sum_{m|\ell r}\sum_{h|\ell r/m}\frac{h}{m}\mathop{\sumsum_{p_1\neq p_2\sim P}}_{p_1\equiv p_2 (h)}1
\ll \frac{N^{1+\varepsilon}P^3}{Mt}.
\end{split}
\end{equation*}

This is dominated by the diagonal contribution $\Omega_0$ \eqref{estimate of omega01}, since $Mt<N$.

Hence we obtain the bound
\begin{equation}\label{estimate of omega1}
\begin{split}
\Omega=&\,\Omega_0+\Omega_{1a}+\Omega_{1b}\\
\ll& \frac{N^{2+\varepsilon}P^3}{M^2t^2}+N^{\varepsilon}(NMt)^{1/2}(PL)^{3/2}.
\end{split}
\end{equation}

Combining \eqref{F1 after Cauchy1} and \eqref{estimate of omega1}, we retrieve the bound on $\mathcal{F}_1$ in the following.
\begin{prop}\label{final bound for F1}
For any given $\varepsilon>0$,
\begin{equation*}
\begin{split}
\mathcal{F}_1\ll& \frac{N^{3/2+\varepsilon}P}{MtL^{1/2}}+N^{3/4+\varepsilon}(MtPL)^{1/4}.
\end{split}
\end{equation*}
\end{prop}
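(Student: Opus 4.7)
The plan is to dualize the $n$-sum by $\mathrm{GL}(3)$ Voronoi, apply Cauchy--Schwarz to extract the $p$-sum, then apply Poisson summation to the resulting inner $n$-sum, and finally estimate the character sum by Lemma \ref{ell_1 not ell_2}. As a preliminary step, I would isolate the degenerate contribution from $p\mid r$ (the only way $(p,r)>1$, since $p$ is prime). For this piece a direct application of Voronoi \eqref{Voronoi0}, combined with the Weil bound and trivial estimation, yields a contribution of size $O(N^{3/2+\varepsilon}/(PMt))$, which is absorbed in the claimed bound. Thus I may assume $(p,\ell r)=1$ in what follows.

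Next, apply Voronoi \eqref{Voronoi0} to the $n$-sum with modulus $\ell r$. The decay of $\mathcal{U}^+$ truncates the dual $(m,n)$-sum at $m^2n\ll N^{2+\varepsilon}P^3/(M^3t^3)$, and within this range the Jacquet--Shalika bound gives $y^j \mathcal{U}^{+,(j)}(y)\ll \sqrt{y}$. I would then pull the $\ell$-sum inside and apply Cauchy--Schwarz in the variables $(r,m,n)$, keeping the $(\ell,p)$-summation inside the square. The Rankin--Selberg estimate on $|\lambda(n,m)|^2$ handles the Fourier coefficient losses, and reduces the task to bounding the quantity $\Sigma$ defined in \eqref{diagonal+off-diagonal 1}.

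Opening the square in $\Sigma$ and applying Poisson summation to the $n$-sum modulo $[\ell_1 r/m,\ell_2 r/m]$, repeated integration by parts truncates the dual $n$-sum at $|n|\ll N^\varepsilon\, NPL/(Mt\,\mathcal{N}_m)$. In this range the integral $\mathcal{T}(n,\ell_1,\ell_2)$ is controlled by the bound \eqref{bound of T1}, namely $\ll m^2\mathcal{N}_m N/(NP/Mt)^3$ (and in particular by $1$). The resulting character sum $\mathcal{C}_{\ell_1,\ell_2}(n)$ is then estimated via Lemma \ref{ell_1 not ell_2}, producing essentially square-root cancellation modulated by a gcd factor $(\Delta,n,\ell_1 r/m,\ell_2 r/m)$ with $\Delta=(\bar{p_1}\ell_2^2-\bar{p_2}\ell_1^2)M/(\ell_1,\ell_2)^2$.

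The remaining work is to sum this up carefully, splitting $\Omega=\Omega_0+\Omega_{1a}+\Omega_{1b}$ according to whether $(n,\Delta)=(0,0)$, $n\neq 0$, or $(n=0,\Delta\neq 0)$. For $\Omega_0$ the condition $\Delta=0$ collapses to $\ell_1=\ell_2$, $p_1=p_2$ and yields the diagonal bound $\ll N^{2+\varepsilon}P^3/(M^2 t^2)$. For $\Omega_{1b}$ a direct computation of $\mathcal{C}_{\ell_1,\ell_2}(0)$ forces $\ell_1=\ell_2$, and a divisor-sum argument shows it is dominated by $\Omega_0$ under the standing assumption $Mt<N$. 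The main obstacle is the off-diagonal $\Omega_{1a}$: one must balance the square-root factor $(r/m)^{3/2}\ell_1\ell_2/(\ell_1,\ell_2)^{1/2}$ and the gcd factor from Lemma \ref{ell_1 not ell_2} against the dual $n$-length $NPL/(Mt\mathcal{N}_m)$ and the bound \eqref{bound of T1}; the goal is to arrive at $\Omega_{1a}\ll N^\varepsilon (NMt)^{1/2}(PL)^{3/2}$. Plugging the combined bound $\Omega\ll N^{2+\varepsilon}P^3/(M^2t^2)+N^\varepsilon(NMt)^{1/2}(PL)^{3/2}$ back into \eqref{F1 after Cauchy1} then yields the two terms claimed in the proposition.
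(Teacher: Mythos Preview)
Your proposal is correct and follows essentially the same approach as the paper: Voronoi on the $n$-sum (after disposing of $p\mid r$), Cauchy--Schwarz in $(r,m,n)$ with the $(\ell,p)$-sum inside, Poisson in $n$ modulo $[\ell_1 r/m,\ell_2 r/m]$, character sum estimated via Lemma~\ref{ell_1 not ell_2}, and the same $\Omega_0+\Omega_{1a}+\Omega_{1b}$ split with the same resulting bounds. The only point left implicit is the routine handling of the gcd factor $(\Delta,n,\ell_1 r/m,\ell_2 r/m)/(n,\ell_1 r/m,\ell_2 r/m)^{1/2}$ in the $\Omega_{1a}$ summation, which the paper also passes over quickly.
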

\begin{rem}
We will assume $L<P$, so that the term $O\left(\frac{N^{3/2+\varepsilon}}{PMt}\right)$ in \eqref{F1 after Cauchy1} is negligible.
\end{rem}

\section{The choices of the parameters $P$ and $L$}
Recall from Proposition \ref{final bound for F1}, one has
\begin{equation*}
\begin{split}
\mathcal{F}_1\ll& \frac{N^{3/2+\varepsilon}P}{MtL^{1/2}}+N^{3/4+\varepsilon}(MtPL)^{1/4},
\end{split}
\end{equation*}
while Proposition \ref{final bound for O} gives
\begin{equation*}
\begin{split}
\mathcal{O}\ll \frac{N^{1/2+\varepsilon}Mt}{P}+N^{\varepsilon}\frac{M^{3/2}t^{3/2}L}{P}.
\end{split}
\end{equation*}

Plugging these bounds into \eqref{connection1}, 
\begin{equation*}
\begin{split}
S(N)\ll& \frac{N^{3/2+\varepsilon}P}{MtL^{1/2}}+N^{3/4+\varepsilon}(MtPL)^{1/4}+\frac{N^{1/2+\varepsilon}Mt}{P}+N^{\varepsilon}\frac{M^{3/2}t^{3/2}L}{P}.
\end{split}
\end{equation*}

Substituting this into Lemma \ref{approximate FE} and noting that $(Mt)^{3/2-\delta}<N<(Mt)^{3/2+\varepsilon}$, one gets
\begin{equation*}
\begin{split}
L\left(\frac{1}{2}+it,\pi\otimes \chi \right)\ll& \frac{(Mt)^{1/2+\varepsilon}P}{L^{1/2}}+(Mt)^{5/8+\varepsilon}(PL)^{1/4}+\frac{(Mt)^{1+\varepsilon}}{P}+\frac{(Mt)^{3/4+\delta/2+\varepsilon}L}{P}
+(Mt)^{3/4-\delta/2+\varepsilon}\\
=&\frac{(Mt)^{5/8+\varepsilon}P^{1/4}}{L^{1/2}}\left(\frac{P^{3/4}}{(Mt)^{1/8}}+L^{3/4}\right)+(Mt)^{\varepsilon}\left(\frac{Mt}{P}+(Mt)^{3/4-\delta/2}\right),
\end{split}
\end{equation*}
upon assuming $L<(Mt)^{1/4-\delta/2}$.

Equate the first two terms by letting $L=P(Mt)^{-1/6}$ to get
\begin{equation*}
\begin{split}
L\left(\frac{1}{2}+it,\pi\otimes \chi \right)\ll& 
(Mt)^{7/12+\varepsilon}P^{1/2}+(Mt)^{1+\varepsilon}/P+(Mt)^{3/4-\delta/2+\varepsilon}.
\end{split}
\end{equation*}

Letting $P=(Mt)^{5/18}$,
\begin{equation}\label{latter case for t1}
\begin{split}
L\left(\frac{1}{2}+it,\pi\otimes \chi \right)\ll& 
(Mt)^{13/18+\varepsilon}+(Mt)^{3/4-\delta/2+\varepsilon}.
\end{split}
\end{equation}

Finally, by choosing $\delta=1/18$, \eqref{latter case for t1} implies that
\begin{equation*}
\begin{split}
L\left(\frac{1}{2}+it,\pi\otimes \chi \right)\ll& 
(Mt)^{3/4-1/36+\varepsilon}.
\end{split}
\end{equation*}
Note that with such choices, $L=(Mt)^{1/9}$ satisfies the assumption $L<(Mt)^{1/4-\delta/2}=(Mt)^{2/9}$.
Theorem \ref{Main theorem} follows.

%

\subsection*{Acknowledgements.}The author is most grateful to his thesis advisor, Professor Roman Holowinsky, for all his guidance and support through out the project. He thanks Zhi Qi and Runlin Zhang for several helpful discussions. The author would also like to express his thanks to Professor Paul D. Nelson for a few insightful comments and Alex Beckwith for useful suggestions which help improve the presentation of this article. Finally the author is very grateful to the referee for several very helpful comments and suggestions and in particular for suggesting a proof of Lemma \ref{bound of J}.

\bibliographystyle{abbrv}
\bibliography{ref}
\end{document}